\documentclass[a4paper,final]{siamart190516}

\usepackage{damacros}
\usepackage{booktabs}
\crefname{hypothesis}{Hypothesis}{Hypotheses}
\renewcommand{\phi}{\varphi}
\usepackage[caption=false]{subfig}
\usepackage{enumitem}
\usepackage{bm}


\headers{Projection methods for Neural Field equations}{D. Avitabile}

\title{Projection methods for Neural Field equations} 

\author{%
  Daniele Avitabile%
  \thanks{%
    Vrije Universiteit Amsterdam,
    Department of Mathematics,
    Faculteit der Exacte Wetenschappen,
    De Boelelaan 1081a,
    1081 HV Amsterdam, The Netherlands.
  \protect\\
    MathNeuro Team,
    Inria Centre at Universit\'e C\^ote d'Azur,
    2004 route des Lucioles-Boîte Postale 93 06902,
    Sophia Antipolis, Cedex, France.
  \protect\\
    (\email{d.avitabile@vu.nl}, \url{www.danieleavitabile.com}).
  }
}

\newcommand{\otoprule}{\midrule[\heavyrulewidth]}

\graphicspath{{./Figures/}}

\begin{document}

\maketitle

\begin{abstract}
  Neural field models are nonlinear integro-differential equations for the evolution
  of neuronal activity, and they are a prototypical large-scale, coarse-grained
  neuronal model in continuum cortices. Neural fields are often simulated
  heuristically and, in spite of their popularity in mathematical neuroscience,
  their numerical analysis is not yet fully established. We introduce generic
  projection methods for neural fields, and derive a-priori error bounds for these
  schemes. We extend an existing framework for
  stationary integral equations to the time-dependent case, which is relevant for
  neuroscience applications. We find that the convergence rate of a projection scheme for a
  neural field is determined to a great extent by the convergence rate of the
  projection operator. This abstract analysis, which unifies the treatment of
  collocation and Galerkin schemes, is carried out in operator form, without
  resorting to quadrature rules for the integral term, which are
  introduced only at a later stage, and whose choice is enslaved by the choice of the
  projector. Using an elementary timestepper as an example, we demonstrate that the
  error in a time stepper has two separate contributions: one from the projector, and
  one from the time discretisation. We give examples of concrete projection methods: two
  collocation schemes (piecewise-linear and spectral collocation) and two Galerkin
  schemes (finite elements and spectral Galerkin); for each of them we derive error
  bounds from the general theory, introduce several discrete variants, provide
  implementation details, and present reproducible convergence tests.
\end{abstract}

\section{Introduction}\label{sec:model}
Neural field models are integro-differential equations describing the
spatially-extended, coarse-grained activity of neurons in continuum cortices. The
simplest and most studied neural field model is written as
\begin{equation} \label{eq:NF}
  \begin{aligned}
  & \partial_t u(x,t) = -u(x,t) + \int_\Omega w(x,y) f(u(y,t))\, dy + \xi(x,t), \qquad
    && (x,t) \in \Omega \times J \\
  & u(x,t_0) = u_0,                         
    && x \in \Omega.
  \end{aligned}
\end{equation}
where $u(x,t)$ is the voltage of a neuronal population at time
$t$ and point $x$ in the cortex $\Omega$, which is a compact domain in
$\mathbb{R}^d$.
The function $w(x,y)$ is
the \emph{synaptic kernel}, modelling synaptic strengths from point $y$ to point
$x$ in the tissue, whereas $f$ is the \emph{firing rate} of the population; the
function $\xi$ models an external input. Finally, $J$ is a time interval
 containing $t_0$, typically $J = [t_0,t_0+T]$ or
$J=[t_0-T,t_0+T]$. Intuitively, the first term in the
right-hand side of the evolution equation in \cref{eq:NF} models local decay, while
the integral term collects inputs from the whole cortex. Typically the function $f$
is taken to be a steep, bounded sigmoidal function (approximating a Heaviside step
function), hence a neuronal patch in $y$ contributes to the activity in $x$
nonlinearly, and only if its voltage is sufficiently high.

Since their introduction by Wilson and Cowan~\cite{wilson1973mathematical}, and
Amari~\cite{amari1977dynamics}, neural fields have
been used to analyse and reproduce macroscopic cortical patters of activity,
including localised stationary bumps, travelling waves and spiral waves. As discussed
in several mathematical neuroscience reviews and textbooks 
\cite{Ermentrout.1998qno,Ermentrout.2010,Bresloff.2012,
Bressloff.2014k0p,coombes2014neural}, the model
\cref{eq:NF} can be extended in many ways, including multiple neuronal
populations, stochastic forcing, and distributed delays. 

Neural field equations support a wide variety of healthy and pathological neuronal
patterns; these relatively simple models are a popular choice for studying
macroscopic neural activity, and they display a dynamical repertoire observable in
more detailed, realistic cortical models
\cite{Bresloff.2012,Bressloff.2014k0p,coombes2014neural}. The study of neural fields
as dynamical systems is now well established, and the
literature contains mostly heuristic numerical simulations of these
models (in addition to the textbooks
\cite{Ermentrout.2010,Bressloff.2014k0p,coombes2014neural} and references therein,
see also
\cite{Folias.2005,hutt2008,Gils.2013,Rankin2013,kilpatrick2013wandering,laing2014numerical,Visser.2017,schmidt2020bumps})
and empirical convergence studies of selected numerical schemes
\cite{Rankin2013,martinNumericalSimulationNeural2018}.

By contrast, the numerical analysis of neural fields is much less 
developed. Intuitively, a numerical scheme can be derived by picking a spatial grid,
approximating the integral with a quadrature scheme, and using a time-stepper for the
corresponding set of ODEs. Convergence results are limited to these schemes (which we
will classify later as \textit{discrete collocation schemes}) in deterministic neural
fields \cite{Lima2015xt}, neural fields with anisotropic diffusion
\cite{Avitabile.2020}, stochastic neural fields \cite{lima2019numerical,kuehnLargeDeviationsNonlocal2014}, and neural
fields with distributed delays \cite{Faye.2010,polner2017}.

The present paper aims to provide an abstract framework for the development of
numerical schemes for neural field equations, thereby laying the foundations for a
systematic treatment of these models. Such theoretical developments are important:
(i) Numerical implementations of neural field models are becoming available in
dedicated software \cite{Leon2013,Nichols.2015,avitabile2020}, yet a generic numerical analytical
characterisation of the schemes employed in simulations is still missing. (ii) The Cauchy problem
\cref{eq:NF} is the archetype of models for cortical activity, being spatially extended,
nonlocal and nonlinear; hence, these equations are useful to prototype numerical
schemes for the neurosciences. (iii) It is hoped that a rigorous
numerical analysis of neural fields will help devising schemes for large scale
models, overcoming the expensive function evaluations which they currently require,
owing to the nonlocal term.

Our approach is to define an abstract function space setup for the numerical
treatment of \cref{eq:NF}, and study schemes in operator form. The strategy pursued in this article is to avoid the
discretisation of the neural field problem until the very last step. This is a useful
guiding principle in modern numerical analysis, as it allows to
understand properties of the scheme in great generality, prior to the spatial
discretisation step.

We characterise schemes for neural fields in terms of a projector $P_n$ from the
ambient Banach space $\XSet$ on which \cref{eq:NF} is posed, to a finite-dimensional
approximating subspace $\XSet_n$. We show that the choice of $\XSet$, $\XSet_n$, and
$P_n$ dictates the nature of the employed scheme and its convergence properties:
roughly speaking, projection schemes for \cref{eq:NF} converge at the same speed as a
projection $P_n v$ converges to $v$ in $\XSet$.

This approach has been used before for integral equations, but not for neural fields.
Steady states of \cref{eq:NF} satisfy a Hammerstein equation, for which projection
schemes have been studied rigorously in research articles by several authors
\cite{Atkinson:1973fe,Atkinson1987,kumar1987new,kumar1988discrete}
and detailed in excellent reviews and textbooks
\cite{Atkinson:1992du, atkinson2005theoretical,Chen:2015vv},
to which we refer for further references.
In the present paper we port this language to the time-dependent problem, and derive
a priori error bounds of projection schemes for \cref{eq:NF}, using primarily the
framework presented in books by Atkinson and coworkers
\cite{atkinson1997,atkinson2005theoretical} and by Chen, Micchelli, and
Xu~\cite{Chen:2015vv}.
The little cross-fertilisation between the numerical analysis and the mathematical
neuroscience community may be one reason why this step has not been taken to date,
and we hope that this paper will strengthen such link.

For neural fields, the abstract function space formulation allows to derive bounds
that are insightful with respect to the few existing convergence results available for
discrete schemes. For
instance, it emerges that the choice a quadrature scheme (which is made upfront in
discrete schemes for neural fields
\cite{Rankin2013,Lima2015xt,lima2019numerical,Avitabile.2020}) is in fact enslaved
by the choice of a projector $P_n$: the latter determines the accuracy that must be
matched by the former, when one commits to a discretisation of the problem. As we
will show here, a discrete scheme formulated without accounting for $P_n$ may waste resources, or pollute the
convergence of the projector scheme from which it is derived.

Also, the projector lets us classify methods into collocation and Galerkin schemes
(each with a Finite Element and Spectral variant), reconciling them with standard
PDE schemes, and suggesting schemes that are currently not used in the mathematical
neuroscience community.
Finally, the projector approach shows that, in a time stepping scheme, the error
splits naturally into a component due to the time discretisation, and a
separate one coming from $P_n$, as one would expect.

The paper is organised as follows: in \cref{sec:prelim} we cast \cref{eq:NF} as a
Cauchy problem on Banach spaces; projection methods of collocation and Galerkin type
are introduced in \cref{sec:projection_methods}, and their convergence properties are
studied in \cref{sec:convergence_projection_method}; concrete examples of how to use
the general convergence theory are presented in \cref{sec:examplesProjectionSchemes},
and convergence results on the Forward Euler time stepper are given in \cref{sec:timeStepper}.
\Cref{sec:numerics} presents numerical experiments, and \cref{sec:conclusions} concludes
the paper.

\subsection{Notation}
We denote by $C(\Omega,\RSet)$ or $C(\Omega)$ the space of
real-valued continuous function defined on $\Omega$, with the supremum norm $\|
\blank \|_\infty$. We use $L^2(\Omega,\RSet)$ or $L^2(\Omega)$ for the
Hilbert space of real-valued square-integrable functions defined on $\Omega$, with
norm $\| \blank \|_{L^2(\Omega)}$. Since we use both functional settings, we will
often consider $\XSet \in \{C(\Omega),L^2(\Omega)\}$, and write $\| \blank \|$, to
indicate the associated norm. 
We write $\| \blank \|_2$ for the $2$-norm on
$\RSet^n$. In addition, we denote by $B(\RSet)$ the space of bounded functions
defined on $\RSet$, with norm $\| \blank \|_\infty$.
We will also indicate by $BL(\XSet,\YSet)$ the space of bounded linear operators from
$\XSet$ to $\YSet$, where $\XSet$ and $\YSet$ are Banach spaces. The space
$BL(\XSet,\YSet)$ is endowed with the operator norm
\[
  \| A \|_{BL(\XSet,\YSet)} = \sup_{x \in \XSet_{\neq 0}} \frac{\| A
  x\|_\YSet}{\| x\|_\XSet}.
\]
We will also abbreviate $\| \blank \|_{BL(\XSet,\YSet)}$ by $\| \blank \|$, when the
context is unambiguous.
Since we work often with indices, it is useful to introduce the index sets 
$\NSet_n = \{1, \ldots,n\}$, $\ZSet_{n}= \{0,\ldots,n-1\}$, and $\ZSet_{\pm n}
=\{0,\pm 1,\ldots,\pm n\}$, for a fixed $n \in \NSet$.

\section{Neural field equations as Cauchy problems on Banach spaces} 
\label{sec:prelim} Before introducing the projection schemes, we wish to cast the neural
field equation
\cref{eq:NF} as a Cauchy problem on a Banach space $\XSet \in \{C(\Omega),
L^2(\Omega)\}$. These concrete choices for $\XSet$ are motivated by the functional
setup that are appropriate for collocation and Galerkin schemes, respectively.
We collect below our working hypotheses, which hold in standard neural field
models in literature~\cite{Ermentrout.2010,Bressloff.2014k0p,coombes2014neural}.
\begin{hypothesis}[Cortex]\label{hyp:domain}
  The domain $\Omega \subset \RSet^d$ is compact,
  with measure $| \Omega |$.
\end{hypothesis}
 
\begin{hypothesis}[Synaptic kernel]\label{hyp:kernel}
  If $\XSet = L^2(\Omega)$, the synaptic kernel $w$ is a function in $L^2(\Omega
  \times \Omega)$. If $\XSet = C(\Omega)$, we assume that the following holds: 
  \begin{align}
    & \lim_{h \to 0} \omega(h) = 0,  \qquad \omega(h) = \sup_{\substack{x,z \in \Omega\\ \| x- z \|_2 \leq h}} \int_\Omega
    |w(x,y) - w(z,y)|\, dy, \\
    & \sup_{x \in \Omega} \int_\Omega |w(x,y)| \, dy < \infty.
  \end{align}
\end{hypothesis}

\begin{hypothesis}[Firing rate]\label{hyp:firingRate}
  The firing rate function $f \colon \RSet \to \RSet$ is a bounded and everywhere
  differentiable Lipschitz function, hence $f,f' \in B(\RSet)$.
\end{hypothesis}

\begin{hypothesis}[External input]\label{hyp:externalInput} The mapping $t \to
  \xi(\blank,t)$ is continuous from $J$ to $\XSet \in \{C(\Omega),L^2(\Omega)\} $.This is written,
  with a slight abuse of notation\footnote{We are using the same symbol, $\xi$, to
  denote a function $\xi \colon \Omega \times J \to \XSet$, and the mapping $t \mapsto
\xi(\blank,t)$ on $J$ to $\XSet$. A similar notation will be also used for $u$.}, $\xi \in C(J,\XSet)$.
\end{hypothesis}

As we shall see in a moment, the hypotheses on the kernel guarantee the compactness
of a suitably defined integral operator $W$ with kernel $w$, which will be defined below \cite[Section
2.8.1]{atkinson2005theoretical}. In passing, we note that any $w \in C(\Omega \times
\Omega)$ satisfies \cref{hyp:kernel}. The firing rate $f$ is required to be
differentiable, thereby excluding the case of Heaviside firing rate, an hypothesis
that is employed in the analytic construction of neural field patterns
\cite[Chapters 1, 3]{amari1977dynamics,coombes2014neural}, but it is
dictated mostly by mathematical convenience, and is less relevant to numerical
simulations.

We define the following operators
\begin{align}
  F(u)(x) & := f(u(x)),               & & x \in \Omega, \label{eq:NemOp} \\ 
  (Wu)(x) & := \int_\Omega w(x,y) u(y)\, dy, & & x \in \Omega, \label{eq:WOp} \\
  K(u)(x) & := (WF(u))(x),         & & x \in \Omega, \label{eq:KOp}
\end{align}
and we rewrite \cref{eq:NF} formally, as
\[
  \begin{aligned}
    & u'(t) = N(t,u(t)):=-u(t) + K(u(t)) + \xi(t) \qquad & t \in J, \\
    & u(t_0) = u_0.
  \end{aligned}
\]
The system above is the sought Cauchy problem on the Banach space $\XSet$; this
section is devoted to make this step more precise, and to present bounds useful in
upcoming calculations.
%
%

%
As a preliminary step we collect a few results on the operators \crefrange{eq:NemOp}{eq:KOp}. We restate or combine results that are known in
literature, and we provide self-contained proofs (with reference to the relevant
papers) in the Appendix.

\begin{lemma}[Nemytskii operator]\label{lem:nemytskii}
  Let $\XSet \in \{C(\Omega),L^2(\Omega)\}$, and assume
  \cref{hyp:domain,hyp:firingRate}. Then 
  $F \colon \XSet \to \XSet$ is bounded and Lipschitz continuous, and
  \begin{equation} \label{eq:boundF}
   \|F(u)\| \leq \kappa_\Omega \|f \|_\infty \quad \text{for all $u \in \XSet$,}
  \end{equation}
  where 
  \begin{equation}\label{eq:kappaOmega}
   \kappa_\Omega = 
      \begin{cases}
	1              & \text{if $\XSet = C(\Omega)$,} \\
	|\Omega|^{1/2} & \text{if $\XSet = L^2(\Omega)$,} 
      \end{cases}
    \end{equation}
   and, in addition
   \begin{equation} \label{eq:LipF}
   \| F(u) - F(v) \| \leq \| f' \|_\infty \| u - v \| \qquad \text{for all $u,v \in \XSet$.}
 \end{equation}
\end{lemma}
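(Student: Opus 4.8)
The plan is to reduce everything to a single scalar observation. Since $f$ is differentiable on all of $\RSet$ with $f' \in B(\RSet)$ by \cref{hyp:firingRate}, the mean value theorem gives
\[
  |f(s) - f(t)| \leq \|f'\|_\infty \, |s - t| \qquad \text{for all } s,t \in \RSet,
\]
so $f$ is globally Lipschitz with constant $\|f'\|_\infty$; combined with the uniform bound $|f(s)| \leq \|f\|_\infty$, this is the only analytic input. The remaining work is to transfer these two scalar estimates to the two functional settings $\XSet = C(\Omega)$ and $\XSet = L^2(\Omega)$, and I would do so in parallel, treating first the well-definedness of $F$, then the norm bound \cref{eq:boundF}, and finally the Lipschitz estimate \cref{eq:LipF}.

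For well-definedness and for \cref{eq:boundF} I would argue as follows. In the case $\XSet = C(\Omega)$, the composition $F(u) = f \circ u$ is continuous, hence lies in $C(\Omega)$, and $\|F(u)\|_\infty = \sup_{x}|f(u(x))| \leq \|f\|_\infty$, which is \cref{eq:boundF} with $\kappa_\Omega = 1$. In the case $\XSet = L^2(\Omega)$, I would note that $f$, being differentiable, is continuous and therefore Borel measurable, so $f \circ u$ is measurable whenever $u$ is; square integrability and the bound then follow from
\[
  \|F(u)\|_{L^2(\Omega)}^2 = \int_\Omega |f(u(x))|^2 \, dx \leq \|f\|_\infty^2 \, |\Omega|,
\]
which is finite because $\Omega$ is compact, hence of finite measure by \cref{hyp:domain}. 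Taking square roots gives \cref{eq:boundF} with $\kappa_\Omega = |\Omega|^{1/2}$, matching \cref{eq:kappaOmega}; the finiteness of the norm simultaneously confirms $F(u) \in \XSet$.

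For the Lipschitz bound I would apply the scalar inequality pointwise, writing $|F(u)(x) - F(v)(x)| = |f(u(x)) - f(v(x))| \leq \|f'\|_\infty \, |u(x) - v(x)|$ for every $x \in \Omega$ (for a.e.\ $x$ in the $L^2$ case). Taking the supremum over $\Omega$ yields \cref{eq:LipF} at once in $C(\Omega)$; squaring, integrating over $\Omega$, and taking square roots yields it in $L^2(\Omega)$, since the constant $\|f'\|_\infty$ factors out of the integral. This establishes the Lipschitz continuity of $F$ in both settings.

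The computation is entirely routine once the mean value theorem is in hand, and I expect the only point requiring care to be the well-definedness of the Nemytskii operator on $L^2(\Omega)$, namely the measurability and square integrability of $f \circ u$. Both are mild here precisely because $f$ is assumed bounded and, being differentiable, continuous, so none of the delicate growth conditions of the general theory of Nemytskii operators on $L^p$ spaces are needed, and the finite measure of $\Omega$ supplies the rest. I would flag this as the main—though minor—obstacle and defer the measurability details to the Appendix, with a pointer to the standard references cited there.
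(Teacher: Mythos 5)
Your proposal is correct and follows essentially the same route as the paper's own proof: the scalar mean value theorem bound $|f(s)-f(t)|\leq \|f'\|_\infty|s-t|$ applied pointwise, combined with the uniform bound $|f|\leq\|f\|_\infty$, then transferred to $C(\Omega)$ via the supremum and to $L^2(\Omega)$ by squaring and integrating over the finite-measure domain. The only cosmetic difference is that the paper delegates the well-definedness of $F$ on $L^2(\Omega)$ to a cited lemma rather than spelling out the measurability of $f\circ u$, which you handle explicitly and correctly.
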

\begin{proof}
  See \cref{app:proof:lem:nemytskii}.
\end{proof}

\begin{lemma}[Linear integral operator] \label{lem:W}
  Let $\XSet \in \{C(\Omega),L^2(\Omega)\}$, and assume
  \cref{hyp:domain,hyp:kernel}. Then $W \colon \XSet \to \XSet$ is a compact linear
  operator with
  \begin{equation}\label{eq:NormW}
    \| W \| \leq \kappa_w, \qquad \kappa_w = 
    \begin{cases}
      \displaystyle{\max_{x \in \Omega} \int_\Omega | w(x,y)| dy} & \text{if $\XSet = C(\Omega)$,} \\[1em]
      \| w \|_{L^2(\Omega \times \Omega)}          & \text{if $\XSet = L^2(\Omega)$.} 
    \end{cases}
    \end{equation}
    If $\XSet = C(\Omega)$, equality holds in \cref{eq:NormW}.
\end{lemma}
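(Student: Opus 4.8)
The plan is to treat the two functional settings separately, establishing in each that $W$ is well-defined, bounded by $\kappa_w$, and compact, and then to address the sharpness claim in the $C(\Omega)$ case. Linearity is immediate from linearity of the integral in \cref{eq:WOp}. For the norm bound when $\XSet = C(\Omega)$, I would estimate pointwise $|(Wu)(x)| \leq \|u\|_\infty \int_\Omega |w(x,y)|\,dy$ and take the supremum over $x \in \Omega$, which is finite by the second part of \cref{hyp:kernel}, to obtain $\|Wu\|_\infty \leq \kappa_w \|u\|_\infty$. When $\XSet = L^2(\Omega)$, I would apply Cauchy--Schwarz pointwise in $x$ to get $|(Wu)(x)|^2 \leq \|u\|_{L^2(\Omega)}^2 \int_\Omega |w(x,y)|^2\,dy$, then integrate in $x$ and use Tonelli to recognise $\iint_{\Omega\times\Omega} |w(x,y)|^2\,dy\,dx = \|w\|_{L^2(\Omega\times\Omega)}^2$, giving $\|W\| \leq \kappa_w$.

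Next I would verify that $W$ maps $\XSet$ into itself and is compact. In the $L^2$ case well-definedness is already contained in the boundedness computation, and compactness follows because $w \in L^2(\Omega\times\Omega)$ makes $W$ a Hilbert--Schmidt operator, which is compact; I would simply cite this standard fact. In the $C(\Omega)$ case I must check that $Wu$ is continuous: from $|(Wu)(x) - (Wu)(z)| \leq \|u\|_\infty \int_\Omega |w(x,y) - w(z,y)|\,dy \leq \|u\|_\infty\, \omega(\|x - z\|_2)$, the first part of \cref{hyp:kernel} forces $Wu$ to be (uniformly) continuous. The very same estimate, now taken uniformly over the unit ball $\|u\|_\infty \leq 1$, delivers equicontinuity of $W(B_1)$, while the norm bound gives uniform boundedness; compactness of $\Omega$ then lets me invoke Arzel\`a--Ascoli to conclude that $W(B_1)$ is relatively compact, i.e. $W$ is compact.

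The remaining, and genuinely delicate, part is the sharpness of \cref{eq:NormW} in the $C(\Omega)$ case. Setting $g(x) = \int_\Omega |w(x,y)|\,dy$, the modulus-of-continuity estimate above (with $\|u\|_\infty = 1$) shows $|g(x) - g(z)| \leq \omega(\|x-z\|_2)$, so $g$ is continuous and, by compactness of $\Omega$, attains its maximum $\kappa_w$ at some $x^\ast \in \Omega$. The natural maximiser of $(Wu)(x^\ast)$ over the unit ball is $u(y) = \operatorname{sgn} w(x^\ast,y)$, for which $(Wu)(x^\ast) = g(x^\ast) = \kappa_w$, but this $u$ need not lie in $C(\Omega)$. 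The obstacle is therefore to approximate this sign function within the admissible class: I would construct a sequence $u_k \in C(\Omega)$ with $\|u_k\|_\infty \leq 1$ converging pointwise almost everywhere to $\operatorname{sgn} w(x^\ast,\blank)$ (for instance by mollification, or via Lusin's theorem), and then pass to the limit using the dominated convergence theorem with dominating function $|w(x^\ast,\blank)|$, integrable by \cref{hyp:kernel}, to obtain $(Wu_k)(x^\ast) \to \kappa_w$. This forces $\|W\| \geq \kappa_w$, and together with the upper bound yields equality. The crux is precisely this construction: keeping the approximants continuous with norm at most one while still driving the integral up to $\kappa_w$, with the dominated convergence step supplying the rigorous passage to the limit.
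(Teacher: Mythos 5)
Your proposal is correct and follows essentially the same route as the paper: the $L^2(\Omega)$ case is handled identically (Cauchy--Schwarz plus Tonelli for the bound, Hilbert--Schmidt for compactness), while for $C(\Omega)$ the paper simply cites Atkinson for compactness and the norm equality, and the arguments you spell out (Arzel\`a--Ascoli via the modulus $\omega$ from \cref{hyp:kernel}, and the approximate sign-function construction with dominated convergence for sharpness) are precisely the standard proofs contained in those references. Your write-up is therefore a more self-contained version of the same argument, with no gaps.
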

\begin{proof}
  See \cref{app:proof:lem:W}.
\end{proof}

\subsection{Cauchy problem on Banach spaces} We now return to the
Cauchy problem
\begin{equation}\label{eq:Cauchy}
  \begin{aligned}
    & u'(t) = N(t,u(t)) :=-u(t) + (WF)(u(t)) + \xi(t)  \quad & t \in J = [t_0-T,t_0+T], \\
    & u(t_0) = u_0.
  \end{aligned}
\end{equation}
As we have seen, \crefrange{hyp:domain}{hyp:externalInput} imply that $N \colon J
\times \XSet \to \XSet$, where $\XSet \in \{ C(\Omega), L^2(\Omega)\}$. We interpret
\cref{eq:Cauchy} as an ODE posed on $\XSet$. We say that $u$ is a solution to
\cref{eq:Cauchy} if: (i) The mapping $u \colon J \to \XSet$ is continuously
differentiable, that is, $u \in C^1(J,\XSet)$, and (ii) \Cref{eq:Cauchy} holds in $\XSet$
for all $t \in J$.

The existence and uniqueness of solutions to neural field equations, has been studied for
$\XSet =C(\Omega)$ by Potthast and beim Graben~\cite{Potthast:2010kb}, and for
$\XSet \in \{L^2(\Omega),H^m(\Omega)\}$ by Faugeras and coworkers
\cite{faugeras2008absolute,Veltz:2010}. There exist rigorous characterisations of neural
fields with delays \cite{Faye.2010,Visser.2017} and stochastic forcing
\cite{faugerasStochastic2015,maclaurin2020wandering}.
Here we present a self-contained proof in $\XSet \in
\{C(\Omega),L^2(\Omega)\}$, relying on the
Picard-Lindel\"of Theorem for the local existence and uniqueness of solutions to ODEs
posed on Banach spaces \cite[Theorem 5.2.4]{atkinson2005theoretical}. We begin with
a result on the operator $N$, which is instrumental for proving existence and
uniqueness of the solution.

\begin{lemma}\label{lem:continuityN}
  Let $\XSet \in \{ C(\Omega), L^2(\Omega)\}$, assume
  \crefrange{hyp:domain}{hyp:externalInput}, and let $Q_b$ be the set 
  \[
    Q_b = 
  \{ (t,u) \in \RSet \times \XSet \colon |t - t_0| \leq T, \; \| u -
    u_0 \| \leq b \}.
  \]
  For any $b>0$, the operator $N \colon Q_b \to \XSet$ is continuous, and Lipschitz
  continuous in its second argument, uniformly with respect to the first, with
  Lipschitz constant $1 + \kappa_w \| f' \|_\infty$, where $\kappa_w$ is given
  in~\cref{eq:NormW}.
\end{lemma}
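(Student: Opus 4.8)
The plan is to reduce everything to the two preliminary lemmas, since $N$ is assembled from the identity, the composition $WF$, and the time-dependent forcing $\xi$, each of which is already under control. First I would record that $N(t,\blank)$ genuinely maps into $\XSet$: by \cref{lem:nemytskii} the Nemytskii operator acts as $F \colon \XSet \to \XSet$, and by \cref{lem:W} the integral operator acts as $W \colon \XSet \to \XSet$, so the composition $WF$ is a well-defined map $\XSet \to \XSet$; together with $-u \in \XSet$ and $\xi(t) \in \XSet$ (from \cref{hyp:externalInput}) this shows $N(t,u) \in \XSet$ for every $(t,u) \in Q_b$.

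For the Lipschitz estimate in the second argument I would fix $t$ and take $u,v$ with $(t,u),(t,v) \in Q_b$. The forcing term does not depend on $u$, so the $\xi(t)$ contributions cancel and
\[
  N(t,u) - N(t,v) = -(u-v) + W\bigl(F(u) - F(v)\bigr).
\]
Applying the triangle inequality, then the operator-norm bound $\|W\| \leq \kappa_w$ from \cref{lem:W} and the Lipschitz bound $\|F(u)-F(v)\| \leq \|f'\|_\infty \|u-v\|$ from \cref{lem:nemytskii}, yields
\[
  \|N(t,u) - N(t,v)\| \leq \|u-v\| + \kappa_w \|f'\|_\infty \|u-v\| = \bigl(1+\kappa_w\|f'\|_\infty\bigr)\|u-v\|.
\]
Crucially this constant does not involve $t$, so the Lipschitz continuity is uniform with respect to the first argument, exactly as claimed.

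For joint continuity on $Q_b$ I would combine the estimate just obtained with the continuity of the forcing. Given $(t,u),(s,v) \in Q_b$, a triangle-inequality split isolates the spatial and temporal variations,
\[
  \|N(t,u) - N(s,v)\| \leq \|N(t,u) - N(t,v)\| + \|N(t,v) - N(s,v)\|,
\]
where the first term is at most $(1+\kappa_w\|f'\|_\infty)\|u-v\|$ by the previous step, while in the second term every $u$-independent piece coincides and only the forcing survives, so it equals $\|\xi(t) - \xi(s)\|$, which tends to $0$ as $t \to s$ by \cref{hyp:externalInput}. Hence $N(t,u) \to N(s,v)$ whenever $(t,u) \to (s,v)$ in $Q_b$, which is the required continuity.

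I do not expect a genuine obstacle here: the substantive analytic content, namely the boundedness of $W$ and the Lipschitz continuity of $F$, has already been established in \cref{lem:W,lem:nemytskii}. The only points deserving a little care are to verify that the composition $WF$ lands in $\XSet$, so that $N$ is well defined as a map into the Banach space, and to phrase the continuity argument jointly in $(t,u)$ rather than separately, which the triangle-inequality split above handles automatically.
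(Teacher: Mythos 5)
Your proposal is correct and follows essentially the same route as the paper: the Lipschitz bound is obtained exactly as in the paper's proof by combining $\|W\|\leq\kappa_w$ from \cref{lem:W} with the Lipschitz estimate \cref{eq:LipF} from \cref{lem:nemytskii}, and your continuity argument (Lipschitz bound for the $u$-variation plus $\|\xi(t)-\xi(s)\|\to 0$ for the $t$-variation) is just a repackaging of the paper's three-term $\eps/3$ decomposition of $N(t_n,u_n)-N(t,u)$. No gaps.
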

\begin{proof}
  See \cref{app:proof:lem:continuityN}.
\end{proof}

The existence and uniqueness of \textit{classical solutions} to neural fields follows
directly from \cref{lem:continuityN} and the Picard-Lindel\"of Theorem.

\begin{theorem}\label{thm:existence}
  Let $\XSet \in \{ C(\Omega), L^2(\Omega)\}$, and assume
  \crefrange{hyp:domain}{hyp:externalInput}. For any $u_0 \in \XSet$
  there exists a unique solution $u \in C^1(J,\XSet)$ to~\cref{eq:Cauchy}.
\end{theorem}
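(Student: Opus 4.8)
The plan is to apply the Picard--Lindel\"of theorem for ODEs on Banach spaces \cite[Theorem 5.2.4]{atkinson2005theoretical}, whose hypotheses are exactly the continuity and Lipschitz properties of $N$ furnished by \cref{lem:continuityN}. First I would recast \cref{eq:Cauchy} in integral form: a map $u \in C(J,\XSet)$ solves the Cauchy problem if and only if it satisfies the Volterra-type fixed-point equation
\[
  u(t) = u_0 + \int_{t_0}^t N(s,u(s))\, ds, \qquad t \in J.
\]
The equivalence rests on the fundamental theorem of calculus for $\XSet$-valued maps: since $s \mapsto N(s,u(s))$ is continuous whenever $u$ is, by \cref{lem:continuityN}, the right-hand side is continuously differentiable, which simultaneously secures the regularity $u \in C^1(J,\XSet)$ demanded by the statement.

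The core of the argument is a contraction-mapping fixed point. By \cref{lem:continuityN}, $N$ is Lipschitz in its second argument with constant $L = 1 + \kappa_w \|f'\|_\infty$, and, crucially, this constant is independent of the ball radius $b$, so that $N(t,\blank)$ is in fact globally Lipschitz on the whole strip $\bar{J} \times \XSet$. I would exploit this by working on $C([t_0-T',t_0+T'],\XSet)$ for an arbitrary $T' < T$, equipped with a weighted (Bielecki) norm $\|u\|_\lambda = \sup_t e^{-\lambda |t-t_0|} \|u(t)\|$ with $\lambda > L$; the associated integral operator is then a strict contraction, and the Banach fixed-point theorem yields a unique solution on $[t_0-T',t_0+T']$. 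Letting $T' \uparrow T$ and invoking uniqueness on overlapping subintervals patches these together into a single solution on the open interval $J$.

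The main obstacle is precisely this passage from a local to a global-in-time solution. A naive invocation of Picard--Lindel\"of guarantees existence only on $|t-t_0| \le \min(T, b/M)$, with $M = \sup_{Q_b} \|N\|$, an interval that may be strictly shorter than $J$; the resolution is the $b$-independence of the Lipschitz constant in \cref{lem:continuityN}, which lifts any finite-time restriction and rules out blow-up. Equivalently, one may continue the local solution, using the linear growth bound implied by the global Lipschitz estimate to preclude escape in finite time. Uniqueness, once local existence is secured, is comparatively routine: subtracting two solutions and applying Gr\"onwall's inequality to $t \mapsto \|u(t) - v(t)\|$ forces them to coincide throughout $J$.
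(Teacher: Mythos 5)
Your proposal is correct, but it takes a genuinely different route from the paper's. The paper applies the Picard--Lindel\"of theorem directly on $Q_b$, obtains a local solution on $[t_0-\tau_0,t_0+\tau_0]$ with $\tau_0=\min(T,b/M)$ and $M=\max_{Q_b}\|N\|$, and then argues that choosing $b>MT$ forces $\tau_0=T$. You instead pass to the Volterra integral formulation, observe that the Lipschitz constant $1+\kappa_w\|f'\|_\infty$ from \cref{lem:continuityN} does not depend on $b$ (so $N$ is globally Lipschitz in its second argument), and run a Banach fixed-point argument in a Bielecki-weighted norm, which produces the solution on the whole time interval in one stroke. What your route buys is robustness on precisely the point you single out as the main obstacle: in the paper's argument $M$ is not independent of $b$ --- since $\|N(t,u)\|$ grows like $\|u\|$ for large $\|u\|$, one has $M\geq b-c$ on $Q_b$, hence $b/M\to 1$ as $b\to\infty$, and the inequality $b>MT$ cannot be arranged when $T\geq 1$. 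The paper's one-shot choice of $b$ therefore implicitly needs a continuation step (repeated local existence with a step size that does not shrink, thanks to the uniform Lipschitz constant) or exactly the global-Lipschitz observation you make; your Bielecki contraction, followed by exhausting the open interval $J$ by compact subintervals and patching via uniqueness, closes this cleanly. Your Gr\"onwall argument for uniqueness and the fundamental-theorem-of-calculus equivalence between \cref{eq:Cauchy} and the integral equation are both standard and correctly deployed.
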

\begin{proof}
  Fix $b>0$. By \cref{lem:continuityN} the operator $N \colon Q_b \to
  \XSet$ is continuous, and uniformly Lipschitz continuous with respect to its
  second argument, then by \cite[Theorem 5.2.4]{atkinson2005theoretical} there exists
  a unique solution $u \in
  C^1([t_0-\tau_0,t_0 + \tau_0],\XSet)$ to \cref{eq:Cauchy}, where
  \[
    \tau_0 = \min(T,b/M), \qquad M = \max_{(t,u) \in Q_b} \| N(t,u) \|.
  \]
  The argument above is valid for arbitrarily large $b$, hence we can take $b > MT$,
  which guarantees $\tau_0 = T$.
\end{proof}

\section{Projection methods}%
\label{sec:projection_methods}
We are now ready to discuss two families of schemes for the neural field equations,
the Collocation and the Galerkin method. They are projection methods, in the
definition classically used for integral
equations~\cite{atkinson1997,atkinson2005theoretical,Chen:2015vv}.
In an abstract projection method, one introduces a Banach space $\XSet$, and a
sequence of finite-dimensional approximating subspaces $\{ \XSet_n \colon n \in \NSet\}$ of $\XSet$, with $\overline{\bigcup_{n \in \NSet}
\XSet_n} = \XSet$. We denote the dimension of the approximating subspace $\XSet_n$ by
$s(n)$, where $s(n) \colon
\NSet \to \NSet$, with $s(n) \to \infty$ as $n\to \infty$.

Each projection method employs a family of \emph{projection
operators}
$\{ P_n \colon n \in \NSet\}$, with $P_n \in BL(\XSet,\XSet_n)$, defined by
$P_n v = v$ for all $v \in \XSet_n$, and chooses an approximation $u_n \in \XSet_n$
to $u \in \XSet$ for which the residual 
  \begin{equation}\label{eq:residualDef}
    r(u) = u' - N(\blank,u)
  \end{equation}
is small, 
in a sense that depends on the particular method under consideration. 
If $\{ \phi_i \colon i \in \NSet_{s(n)}\}$
is a basis for $\XSet_n$, then $u_n(t)$ can be written as
\begin{equation}\label{eq:basisDecomp}
  u_n(t) = \sum_{j \in \NSet_{s(n)}} a_j(t) \phi_j.
\end{equation}

We define an abstract projection method by setting
\[
  \begin{aligned}
    & P_n r(u_n(t)) = 0, & t \in J = [t_0-T,t_0+T], \\
    & P_n \big( u_n(t_0)-u(t_0) \big) = 0, &
  \end{aligned}
\]
that is,
\begin{equation}\label{eq:projCauchy}
  \begin{aligned}
  & u_n'(t) = P_n N(t,u_n(t)), \qquad t \in J =[t_0-T,t_0+T], \\
  & u_n(t_0) = P_n u_0.
  \end{aligned}
\end{equation}
System \cref{eq:projCauchy} is a Cauchy problem in the $s(n)$-dimensional Banach
space $\XSet_n$, and this evolution equation is useful to prove convergence results for abstract
schemes. As we shall see below, concrete choices of $\XSet$, $\XSet_n$, and the projector
$P_n$ lead to Cauchy problems on $\RSet^{s(n)}$ which are equivalent to
\cref{eq:projCauchy} and useful in numerical implementations. Such Cauchy problems in
$\RSet^{s(n)}$ may differ strongly between each other, as they depend
on the choice of $P_n$; one of the contributions of this paper is that it is possible
to analyse schemes in a unified manner using \cref{eq:projCauchy}, and to derive the convergence rate
of a concrete projection scheme from the abstract theory, which we will now present.

\subsection{Convergence of the abstract projection
method}\label{sec:convergence_projection_method}
Motivated by the previous discussion, we present convergence results for the abstract
projection scheme \cref{eq:projCauchy}. We begin by proving existence and
uniqueness of solutions to this problem.
\begin{theorem}\label{thm:existenceProjCauchy}
  Let $\XSet \in \{C(\Omega),L^2(\Omega)\}$, and assume
  \crefrange{hyp:domain}{hyp:externalInput}. Further fix $n \in \NSet$, and let $P_n$
  be a projection operator from $\XSet$ to $\XSet_n$, with $\dim \XSet_n < \infty$.
  For any $u_0 \in \XSet$ there exists a unique solution $u_n \in C^1(J,\XSet_n)$
to \cref{eq:projCauchy}.
\end{theorem}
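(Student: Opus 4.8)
The plan is to obtain existence and uniqueness of solutions to the projected Cauchy problem \cref{eq:projCauchy} by mimicking the proof of \cref{thm:existence}, that is, by verifying the hypotheses of the Picard--Lindel\"of Theorem \cite[Theorem 5.2.4]{atkinson2005theoretical} for the projected vector field $N_n(t,v) := P_n N(t,v)$ on the finite-dimensional space $\XSet_n$. First I would observe that, since $P_n v = v$ for $v \in \XSet_n$ and $P_n u_0 \in \XSet_n$, the problem \cref{eq:projCauchy} is genuinely posed on $\XSet_n$: if $u_n(t) \in \XSet_n$ then the right-hand side $P_n N(t,u_n(t))$ lies in $\XSet_n$ as well, so a solution starting at $P_n u_0 \in \XSet_n$ stays in $\XSet_n$, and we may regard $N_n$ as a map into $\XSet_n$.

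The key step is to transfer the continuity and Lipschitz estimates for $N$ established in \cref{lem:continuityN} to $N_n$. Here I would exploit that $P_n \in BL(\XSet,\XSet_n)$ is a bounded linear operator, so for any $(t,u),(t,v)$ in the analogue of $Q_b$ restricted to $\XSet_n$ we have
\[
  \| N_n(t,u) - N_n(t,v) \| = \| P_n\big( N(t,u) - N(t,v)\big) \|
  \leq \| P_n \| \, \big(1 + \kappa_w \| f' \|_\infty\big) \, \| u - v \|,
\]
so that $N_n$ inherits uniform Lipschitz continuity in its second argument with constant $\| P_n \| (1 + \kappa_w \| f' \|_\infty)$. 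Continuity of $N_n$ in $(t,u)$ follows likewise by composing the continuity of $N$ from \cref{lem:continuityN} with the bounded linear (hence continuous) map $P_n$. I would then invoke the Picard--Lindel\"of Theorem on $\XSet_n$, which is itself a Banach space when equipped with the norm inherited from $\XSet$, to produce a unique local solution $u_n \in C^1([t_0-\tau_0, t_0+\tau_0],\XSet_n)$ with $\tau_0 = \min(T, b/M_n)$ and $M_n = \max \| N_n(t,u) \|$ over the relevant set.

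Finally, to obtain a solution on all of $J$ rather than a short interval, I would repeat the continuation argument from the proof of \cref{thm:existence}: the Lipschitz and boundedness estimates hold for arbitrarily large $b$, so choosing $b > M_n T$ forces $\tau_0 = T$ and yields the global solution $u_n \in C^1(J,\XSet_n)$. The main obstacle, though a mild one, is the bookkeeping required to confirm that all the hypotheses of the Picard--Lindel\"of Theorem transfer cleanly through $P_n$: in particular one must check that the boundedness of $N$ (not just its Lipschitz property) passes through $P_n$ to control $M_n$, and that the finite-dimensionality of $\XSet_n$ is used only to ensure it is a closed (hence complete) subspace of $\XSet$, so that the abstract ODE theorem genuinely applies. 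No delicate estimate is needed beyond those already in \cref{lem:nemytskii,lem:W,lem:continuityN}; the content of the proof is essentially the application of boundedness of $P_n$ to the previously established bounds.
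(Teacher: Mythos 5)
Your proposal is correct and follows essentially the same route as the paper: the paper's proof likewise transfers the continuity and uniform Lipschitz estimates of \cref{lem:continuityN} through the bounded linear operator $P_n$ and then applies the Picard--Lindel\"of Theorem on $\XSet_n$, extending to all of $J$ as in \cref{thm:existence}. Your additional remarks on controlling $M_n$ and on the completeness of the finite-dimensional subspace are sound but left implicit in the paper.
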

\begin{proof}
  See \cref{app:proof:thm:existenceProjCauchy}.
\end{proof}

We are concerned with determining whether the solution $u_n(t)$ of the projection
scheme \cref{eq:projCauchy} converges to the solution $u(t)$ to the neural field
problem \cref{eq:Cauchy}. Since both $u$ and $u_n$ are in $C(J,\XSet)$, we are
interested in determining conditions under which 
\[
  \| u_n - u \|_{C(J,\XSet)} = \max_{t \in J} \| u_n(t) -u(t) \| \to 0 \qquad
  \text{as $n\to \infty$}.
\]
The following result addresses this question for generic projection schemes in neural
fields, and it relates the convergence of $u_n$ to $u$ in $C(J,\XSet)$ to the
pointwise convergence of the projection operator $P_n v \to v$ in $\XSet$.

\begin{theorem}[Convergence of the projection method]\label{thm:convergence}
  Let $\XSet \in \{C(\Omega),L^2(\Omega)\}$, and assume
  \crefrange{hyp:domain}{hyp:externalInput}. For all solutions $u_n$ and $u$ to
  \cref{eq:Cauchy,eq:projCauchy}, respectively, the following bounds hold:
  \begin{align}
    & \| u - P_n u \|_{C(J,\XSet)} \leq \alpha_n \label{eq:boundSolProj} \\
    \frac{1}{1+ \beta_n} & \|u - P_n u\|_{C(J,\XSet)} \leq  
    \|u - u_n\|_{C(J,\XSet)} \leq e^{\beta_n} \|u - P_n u\|_{C(J,\XSet)},
    \label{eq:sandwich} 
  \end{align}
  where
  \begin{align}
    \alpha_n  & = e^T \Big( \| u_0 - P_n u_0 \| +  
		T \| f \|_\infty \| W - P_n W\| + T \| \xi - P_n \xi \|_{C(J,\XSet)} \Big), \label{eq:alphaDef}\\
    \beta_n & = e^T T  \| P_n W \| \| f' \|_\infty. \label{eq:betaDef}
  \end{align}
  Further, assume there exist $p$, $n_0$ such that $\|P_n W\| \leq p$ for $n > n_0$, then:
  \begin{enumerate}
    \item 
      $u_n \to u$ as $n \to \infty$ in $C(J,\XSet)$ for all solutions $u$ to
      \cref{eq:projCauchy} if, and only if, $P_n v \to v$ as $n \to \infty$ in
      $\XSet$ for all $v$ in $\XSet$.
    \item 
      If convergence occurs, then $\| u- u_n\|_{C(J,\XSet)}$ and $\| u-
      P_nu\|_{C(J,\XSet)}$ converge to $0$ at exactly the same speed.
  \end{enumerate}
\end{theorem}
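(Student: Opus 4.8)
The plan is to reduce the whole statement to two auxiliary quantities: the projection error $e = u - P_n u$ and the discrete error $w_n = u_n - P_n u$, each of which solves a linear ODE on $\XSet$ that can be integrated explicitly. Since $P_n \in BL(\XSet,\XSet_n)$ is bounded and linear it commutes with both $t$-differentiation and Bochner integration, so applying $P_n$ to \cref{eq:Cauchy} shows that $P_n u$ solves $(P_n u)' = P_n N(\blank,u)$ with $(P_n u)(t_0)=P_n u_0$. Subtracting this from \cref{eq:Cauchy}, and from \cref{eq:projCauchy} (using $P_n(u_n-u)=w_n$), I obtain
\[
 e'(t) = -e(t) + (W - P_n W)F(u(t)) + (\xi(t) - P_n\xi(t)), \qquad e(t_0) = u_0 - P_n u_0,
\]
\[
 w_n'(t) = -w_n(t) + P_n W\big(F(u_n(t)) - F(u(t))\big), \qquad w_n(t_0) = 0.
\]
The decisive structural feature I would exploit is that $J=[t_0,t_0+T]$ is a \emph{forward} interval, so the decay term $-u$ yields an integrating factor $e^{-(t-s)}\le 1$ for $t_0\le s\le t$; this is exactly what makes the a priori bound exponential-free.

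For \cref{eq:boundSolProj} I would use variation of constants, $e(t)=e^{-(t-t_0)}e(t_0)+\int_{t_0}^t e^{-(t-s)}\big[(W-P_nW)F(u(s))+(\xi(s)-P_n\xi(s))\big]\,ds$, take norms with $e^{-(t-s)}\le 1$, and control the nonlinear term through \cref{lem:nemytskii} via $\|F(u(s))\|\le\kappa_\Omega\|f\|_\infty$. Taking the supremum over $t\in J$ delivers $\alpha_n$ as in \cref{eq:alphaDef}.

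For the sandwich \cref{eq:sandwich} I would first integrate the $w_n$-equation to get $\|w_n(t)\|\le\|P_nW\|\|f'\|_\infty\int_{t_0}^t\|u_n(s)-u(s)\|\,ds$, again using $e^{-(t-s)}\le1$ and the Lipschitz bound of \cref{lem:nemytskii}. The key move for the sharp upper constant is to feed this directly into an inequality for $g(t):=\|u(t)-u_n(t)\|$ instead of bounding $w_n$ in isolation: since $u-u_n=(u-P_nu)-w_n$, one has $g(t)\le\|u-P_nu\|_{C(J,\XSet)}+\|P_nW\|\|f'\|_\infty\int_{t_0}^t g(s)\,ds$, and Gr\"onwall produces the factor $e^{\beta_n}$ with $\beta_n$ as in \cref{eq:betaDef}. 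The lower bound follows from $\|u-P_nu\|\le g+\|w_n\|$ together with $\|w_n\|_{C(J,\XSet)}\le\beta_n\|u-u_n\|_{C(J,\XSet)}$, which rearranges to the factor $1/(1+\beta_n)$.

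Finally, for the dichotomy the hypothesis $\|P_n W\|\le p$ for $n>n_0$ keeps $\beta_n$ bounded, so both $e^{\beta_n}$ and $1/(1+\beta_n)$ stay bounded and bounded away from zero; part~2 is then immediate from \cref{eq:sandwich}. For part~1 the sandwich reduces $u_n\to u$ to $\|u-P_nu\|_{C(J,\XSet)}\to0$. The forward implication I would get by evaluating at $t=t_0$, where $\|u_0-P_nu_0\|\le\|u-P_nu\|_{C(J,\XSet)}$, and invoking \cref{thm:existence} to realise every $v\in\XSet$ as an initial datum. The converse is the delicate point and, I expect, the main obstacle: pointwise convergence $P_nv\to v$ must be \emph{upgraded} to convergence uniform in $t$, i.e. $\max_{t\in J}\|u(t)-P_nu(t)\|\to0$. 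I would handle this with a compactness argument: $P_nv\to v$ for every $v$ forces $\sup_n\|P_n\|<\infty$ by the uniform boundedness principle, the trajectory $u(J)$ is compact in $\XSet$ because $u\in C(J,\XSet)$, and a finite-net estimate over this compact set converts the equibounded pointwise convergence into uniform convergence, giving $\|u-P_nu\|_{C(J,\XSet)}\to0$ and hence $u_n\to u$.
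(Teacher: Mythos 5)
Your proposal is correct and follows essentially the same route as the paper: your integrated $w_n$-equation is exactly the paper's key identity \cref{eq:conv_proof_id_0} (since $u_n-u = w_n - e$), the Gr\"onwall step and the $1/(1+\beta_n)$ rearrangement coincide, your UBP-plus-finite-net argument on the compact trajectory $u(J)$ reproduces the cited Lemma 12.1.3 of Atkinson, and your direct ``evaluate at $t_0$'' argument for the forward implication is the paper's contrapositive argument restated. The only (harmless) deviations are organisational: you work with $e$ and $w_n$ separately rather than with $u_n-u$ directly, and you retain the factor $\kappa_\Omega$ in the $\|W-P_nW\|$ term of $\alpha_n$, which is in fact the more careful bound in the $L^2$ case.
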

\begin{proof}

We begin by deriving the upper and lower bounds~\cref{eq:sandwich}. From
\cref{eq:Cauchy,eq:projCauchy}, and omitting dependence on $t$
\begin{equation*}
  \begin{aligned}
    (u_n - u)' & = P_n\big[ N(\blank,u_n) - N(\blank,u) \big] + P_n
    N(\blank,u) - N(\blank,u) & \\
		     & = P_n\big[ N(\blank,u_n) - N(\blank,u) \big] + ( P_nu -u)'
		     & \text{on $J$},
  \end{aligned}
\end{equation*}
hence, using the definition of $N$ and rearranging terms,
\begin{equation*}
    (u_n - u)' + (u_n - u) = ( P_nu -u)' + ( P_nu -u) +
		     P_n W\big[ F(u_n) - F(u) \big] \quad \text{on $J$}.
\end{equation*}
Integrating the previous identity against an exponential factor 
from $\min(t_0,t)$ to $\max(t_0,t)$, and applying initial conditions
\cref{eq:Cauchy,eq:projCauchy}, we arrive at
\begin{equation}\label{eq:conv_proof_id_0}
  u_n(t) - u(t) = P_nu(t) -u(t) + \int_{t_0}^t e^{s-t} P_n W \big[ F(u_n(s)) - F(u(s))
  \big]\, ds, \quad t \in J.
\end{equation}

We note that the identity above is valid for $t \geq t_0$ as well as
$t< t_0$, and can be used to find both the lower and the upper bound in
\cref{eq:sandwich}. For the upper bound, take norms in $\XSet$ and bound
$e^{s-t}$
from above by $e^T$, arriving at
\[
  \| u(t) - u_n(t) \| \leq \| u(t) - P_n u(t) \| +B_n  \bigg|  \int_{t_0}^t \| u(s)
  - u_n(s) \|\, ds \bigg| , \qquad t \in J,
\]
where $B_n  = e^T \| P_n W \| \| f' \|_\infty$. Using Gronwall's
inequality in the form given by Amann in
\cite[Chapter 2, Lemma 6.1]{amannOrdinaryDifferentialEquations2011} we obtain, for all $t \in J$,
\[
  \| u(t) - u_n(t) \| \leq \| u(t) - P_n u(t) \| + B_n \bigg| \int_{t_0}^t \| u(s) -
  P_nu(s) \| e^{B_n|t-s|} \, ds \bigg|.
\]
By \cref{thm:existence} $P_n u - u \in C(J,\XSet)$, hence
\[
  \begin{aligned}
  \| u - u_n \|_{C(J,\XSet)} 
  & \leq \| u - P_n u \|_{C(J,\XSet)} 
    \max_{t \in J} 
    \bigg(
      1 + B_n \bigg| \int_{t_0}^t e^{B_n|t-s|}\,ds \bigg|
  \bigg) \\
  & = \| u - P_n u\|_{C(J,\XSet)} 
  \max_{t \in [t_0-T,t_0+T]} e^{B_n|t-t_0|} 
  =: e^{\beta_n} \| u - P_n u\|_{C(J,\XSet)} 
  \end{aligned}
\]
which is the upper bound in \cref{eq:sandwich}. For the lower bound, return to
\cref{eq:conv_proof_id_0} and estimate
\[
  \| u(t) - P_n u(t) \| \leq \| u(t) - u_n(t) \| + B_n \bigg|  \int_{t_0}^t \| u(s)
  - u_n(s) \|\, ds \bigg|, \qquad t \in J.
\]
and, since $u - u_n \in C(J,\XSet)$,
\[
  \| u - P_n u \|_{C(J,\XSet)} \leq (1 + \beta_n) \| u - u_n \|_{C(J,\XSet)},
\]
which gives the lower bound in \cref{eq:sandwich}. We then proceed to prove
\cref{eq:boundSolProj}. From \cref{eq:Cauchy}
\[
  (u-P_n u)' + (u-P_n u) = (W-P_nW) F(u) + (\xi - P_n \xi)
\]
which gives
\[
  \begin{aligned}
  u(t) - P_n u(t) = e^{t_0-t} (u_0 - P_n u_0) 
  & + \int_{t_0}^t e^{s-t} (W-P_nW) F(u(s)) \, ds \\
  & + \int_{t_0}^t e^{s-t} \big( \xi(s) - P_n\xi(s) \big)\, ds, \qquad
  t \in J
  \end{aligned}
\] 
Taking norms in $\XSet$, bounding the exponentials by $e^T$ and recalling that
$\xi - P_n \xi \in C(J,\XSet)$, we obtain \cref{eq:boundSolProj},
\[
    \| u - P_n u \|_{C(J,\XSet)} 
      \leq 
      e^T \bigg(
      \| u_0 - P_n u_0 \|  
     + T \| W - P_n W\| \| f\|_\infty  + T \| \xi - P_n\xi \|_{C(J,\XSet)} \bigg)
     =: \alpha_n,
\] 

The sufficient condition of statement 1 can be proved without the condition
on $\| P_n W \|$. If $P_n v \to v$ for all $v \in \XSet$ then, by \cite[Lemma
12.1.3]{atkinson2005theoretical}, $\{P_n v\}$ is uniformly convergent for all $v$ in
the subset $\{u(t) \colon t \in J\} \subset \XSet$, which is compact because $J$ is
compact and $u$ continuous. Hence $\| u -
P_n u\|_{C(J,\XSet)} \to 0$ as $n\to \infty$. Further, by \cite[Theorem
12.1.4]{atkinson2005theoretical} the compactness of $W$ implies $\| W - P_n W \| \to
0$. This in turn implies that $\beta_n$ is convergent, and hence bounded by some
$\beta > 0$. We conclude that, if $P_n v \to v$ for all $v \in \XSet$ then, for any
solution $u$ to \cref{eq:Cauchy}
\begin{equation}\label{eq:boundSpeed}
    \| u_n - u\|_{C(J,\XSet)} \leq e^{\beta} \| P_n u - u\|_{C(J,\XSet)} 
			      \to 0 \quad \text{as $n \to \infty$}.
\end{equation}

Henceforth we use the hypothesis on $ \| P_n W \|$, which guarantees the existence of $n_0$
and $p$ such that $\| P_n W\| < p$ for all $n > n_0$. The latter implies that
$\beta_n$ is bounded by $\beta = p Te^T\| f' \|_\infty$ for all $n > n_0$. 

The necessary condition in statement 1 is proved by contrapositive, that is, we
prove that if there exists $z_0 \in \XSet$ for which $P_n z_0 - z_0$ diverges in
$\XSet$, then there exist solutions $z$, $z_n$ to 
\cref{eq:Cauchy,eq:projCauchy}, respectively, such that $z_n-z$ diverges in
$C(J,\XSet)$. If we take $z$ and $z_n$ to be the solutions to
\cref{eq:Cauchy,eq:projCauchy} with initial conditions $z_0$ and
$P_n z_0$, respectively, we obtain, using the lower bound in \cref{eq:sandwich}
\[
  \| z_0 - P_n z_0 \| \leq \| z - P_n z \|_{C(J,\XSet)} \leq (1 + \beta) \| z -
  z_n\|_{C(J,\XSet)} \quad \text{for all $n>n_0$},
\]
and since $z_0 - P_n z_0$ diverges in $\XSet$, then $z - z_n$ diverges in $C(J,\XSet)$.

The proof of statement 2 follows from \cref{eq:sandwich}, because by the hypothesis
on $\| P_nW \|$ 
\[
 \frac{1}{1+ \beta} \|u - P_n u\|_{C(J,\XSet)} \leq  
 \|u - u_n\|_{C(J,\XSet)} \leq e^\beta \|u - P_n u\|_{C(J,\XSet)}
 \quad \text{for all $n>n_0$}
\]
\end{proof}

  \cref{thm:convergence} holds for $J = [t_0,t_0+T]$, but in many cases one is
  interested in the forward problem, $J = [t_0,t_0+T]$. The theorem still holds in
  this case, with smaller constants $\alpha_n$ and $\beta_n$, as stated below:
  \begin{theorem}[Convergence of projection method for forward
    problem]\label{thm:convergenceForward}
    If the
    hypotheses of \cref{thm:convergence} hold on $J=[t_0,t_0+T]$, then
    \cref{thm:convergence} holds with 
  \begin{align}
    \alpha_n  & = \| u_0 - P_n u_0 \| +  
		T \| f \|_\infty \| W - P_n W\| + T \| \xi - P_n \xi \|_{C(J,\XSet)}, \label{eq:alphaDefPos}\\
    \beta_n & =  T  \| P_n W \| \| f' \|_\infty. \label{eq:betaDefPos}
  \end{align}
  \end{theorem}
  \begin{proof}
    The proof is almost identical to the one of \cref{thm:convergence}, in that
    \cref{eq:conv_proof_id_0} holds now for $J = [t_0,t_0+T]$. The constants
    $\alpha_n$ and $\beta_n$ differ from the ones in \cref{thm:convergence}: they do not
    display the factor $e^T$ because one can now bound the
    exponentials $e^{s-t}$ and $e^{t_0-t}$ in the proof of \cref{thm:convergence} by $1$, as opposed to $e^T$. 
  \end{proof}

\cref{thm:convergence} and its variant, \cref{thm:convergenceForward}, are the central results of the paper, and we make a few comments
on how they can be used when a concrete choice of $P_n$ is made, that is, when a
particular scheme is selected. There are two possible scenarios:
\begin{description}
  \item[Case 1]\label{item:scenario1} The projector is such that $P_n v \to v$ for all $v \in \XSet$. This covers
    several, but not all cases; in this circumstance convergence is ensured for all
    solutions $u$, at precisely the same speed as $\| u - P_n u\|_{C(J,\XSet)}$. As we shall
    see below, an estimate of $\| u - P_n u\|_{C(J,\XSet)}$ can often be obtained
    by studying the convergence of $\{P_n v\}$ in $\XSet$: it suffices to study the
    ``spatial" convergence rate of the projector operator to assess the
    ``spatiotemporal" convergence rate.
  \item[Case 2]\label{item:scenario2} The projector is such that $P_n v \to v$ fails for some $v \in \XSet$. In
    this case, the method does not converge for all solutions $u$. Convergence to
    \textit{certain} $u$ may still be possible though: convergence is
    guaranteed for problems in which $\|W - P_n W\|$, $\| u_0 - P_n u_0 \|$, and $\| \xi
    - P_n \xi\|_{C(J,\XSet)}$ tend to $0$ as $n \to \infty$.
    These conditions ensure that $\| P_n W \|$ and $\beta_n$ are bounded,
    and that $\alpha_n \to 0$ as $n \to \infty$, hence combining
    \cref{eq:boundSolProj,eq:sandwich} we have
    \[
      \| u - u_n\|_{C(J,\XSet)} \leq \alpha_n e^{\beta} \to 0 \qquad \text{as $n \to
      \infty$}.
    \]
    The asymptotic convergence rate of the scheme is the one of $\{\alpha_n\}$.
\end{description}

In passing, we note that an analysis of abstract discrete projection schemes 
seems possible: starting from \cref{eq:projCauchy} one can introduce a quadrature
scheme $Q_q$ with $q \in \NSet$ nodes, use it to define a new nonlinear problem
$u_{n,q}' = P_n N_q(\blank,u_{n,q})$ on $J$ with initial condition $u(t_0) = P_n
u_0$, and investigate whether the error bound $\| u - u_{n,q}\|$ splits in a
component proportional to the projection error, $\|u - P_n u \|$, and one
proportional to the $q$-dependent quadrature error. A useful framework for these
results is the theory of collectively compact operator approximations
\cite{Anselone.1971,Atkinson:1973fe}, albeit this avenue of research is not pursued
in the present paper.

We conclude this section by presenting bounds on the first and second derivative of
$u_n$, which are useful in upcoming calculations.
\begin{lemma}\label{lem:boundUnPrime}
  Let $\XSet \in \{C(\Omega),L^2(\Omega)\}$, and assume
  \crefrange{hyp:domain}{hyp:externalInput} hold for $J =[t_0,t_0+T]$ or
  $J=[t_0-T,t_0+T]$. For all solutions $u_n$ to
  \cref{eq:projCauchy} it holds
  \begin{equation}\label{eq:boundUnPrime}
    \| u'_n \|_{C(J,\XSet)} \leq \| u_n \|_{C(J,\XSet)} + \| P_n \xi \|_{C(J,\XSet)}
    + \gamma_n, \qquad \gamma_n = \kappa_\Omega \| P_n W\| \| f \|_\infty.
  \end{equation}
  where $\kappa_\Omega$ is defined in \cref{eq:kappaOmega}. If, in addition, $\xi \in
  C^1(J,\XSet)$, then $u_n \in C^2(J,\XSet)$ and
  \begin{equation}\label{eq:boundUnDoublePrime}
    \| u''_n \|_{C(J,\XSet)} \leq L_n \| u'_n \|_{C(J,\XSet)} + \| P_n \xi'
    \|_{C(J,\XSet)}, \qquad L_n = 1 + \beta_n/T,
  \end{equation}
  with $\beta_n$ given by \cref{eq:betaDef}. Further, if $P_n v \to v$ for all $v
  \in \XSet$, there exist positive constants $\kappa_1, \kappa_2$, independent of $n$,
  such that
  \begin{equation}\label{eq:boundUniformUnDiff}
    \| u'_n \|_{C(J,\XSet)} \leq \kappa_1, \qquad
    \| u''_n \|_{C(J,\XSet)} \leq \kappa_2.
  \end{equation}
  \begin{proof}
    See~\cref{app:proof:lem:boundUnPrime}.
  \end{proof}
  
\end{lemma}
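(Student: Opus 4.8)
The plan is to read all three estimates directly off the projected Cauchy problem \cref{eq:projCauchy}, after rewriting its right-hand side so that $P_n$ acts only on the terms not already lying in $\XSet_n$. Since $u_n(t) \in \XSet_n$ and $P_n$ fixes $\XSet_n$ pointwise, $P_n u_n(t) = u_n(t)$, so
\begin{equation*}
  u_n'(t) = P_n N(t,u_n(t)) = -u_n(t) + P_n W F(u_n(t)) + P_n \xi(t), \qquad t \in J.
\end{equation*}
For \cref{eq:boundUnPrime} I would take norms in $\XSet$, use submultiplicativity of the operator norm together with the bound $\|F(u_n(t))\| \leq \kappa_\Omega \|f\|_\infty$ from \cref{lem:nemytskii}, and pass to the supremum over $t \in J$; this gives the stated bound with $\gamma_n = \kappa_\Omega \|P_n W\| \|f\|_\infty$.

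For the second-derivative bound I would differentiate the identity above in $t$. Formally,
\begin{equation*}
  u_n''(t) = -u_n'(t) + P_n W \big[ f'(u_n(t))\, u_n'(t) \big] + P_n \xi'(t),
\end{equation*}
where $f'(u_n(t))\,u_n'(t)$ denotes pointwise multiplication. Taking norms, using the infinitesimal form of the Lipschitz estimate \cref{eq:LipF}, namely $\|f'(u_n(t))\,u_n'(t)\| \leq \|f'\|_\infty \|u_n'(t)\|$, together with $\beta_n/T = \|P_n W\| \|f'\|_\infty$, yields \cref{eq:boundUnDoublePrime} with $L_n = 1 + \beta_n/T$. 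The real work is to justify that $u_n \in C^2(J,\XSet)$ and that the Nemytskii term may be differentiated as written. I would establish this by a difference-quotient argument: writing $u_n(t) = \sum_j a_j(t)\phi_j$ with $a_j \in C^1(J,\RSet)$ (granted by \cref{thm:existenceProjCauchy}, since coordinates on the finite-dimensional $\XSet_n$ are continuous linear functionals), I would show $t \mapsto F(u_n(t))$ is differentiable with the claimed derivative by examining $h^{-1}[f(u_n(t+h,x)) - f(u_n(t,x))]$ pointwise in $x$. Differentiability of $f$ at each point supplies the pointwise limit $f'(u_n(t,x))\,u_n'(t,x)$ — crucially this requires only pointwise differentiability of $f$, not continuity of $f'$ — and the Lipschitz bound provides the domination $|h^{-1}[f(u_n(t+h,x))-f(u_n(t,x))]| \leq \|f'\|_\infty \sum_j C_j |\phi_j(x)|$, uniform for small $h$. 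In $L^2(\Omega)$ this domination licenses passage to the limit by dominated convergence; in $C(\Omega)$ one argues uniformity in $x$ directly. Continuity of $\xi'$ then propagates $C^1$ regularity through the whole right-hand side, so $u_n \in C^2$.

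The main obstacle is precisely this differentiability step for the composition $t \mapsto F(u_n(t))$: the Nemytskii operator $F$ is in general \emph{not} Fréchet differentiable on $L^2(\Omega)$, so one cannot simply invoke a chain rule in $\XSet$. What rescues the argument is that $u_n$ takes values in the finite-dimensional subspace $\XSet_n$ with $C^1$ coefficients, which makes the difference quotients uniformly dominated and lets the pointwise differentiability of $f$ carry the argument.

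Finally, for the uniform bounds \cref{eq:boundUniformUnDiff} under the hypothesis $P_n v \to v$ for all $v \in \XSet$, I would collect the boundedness facts already assembled in the proof of \cref{thm:convergence}: $\|W - P_n W\| \to 0$ makes $\{\|P_n W\|\}$ bounded (hence $\gamma_n$, $\beta_n$, and $L_n$ bounded); $u_n \to u$ in $C(J,\XSet)$ makes $\{\|u_n\|_{C(J,\XSet)}\}$ bounded; and uniform convergence of $P_n$ on the compact sets $\{\xi(t) : t \in J\}$ and $\{\xi'(t) : t \in J\}$ makes $\{\|P_n \xi\|_{C(J,\XSet)}\}$ and $\{\|P_n \xi'\|_{C(J,\XSet)}\}$ bounded. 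Substituting these into \cref{eq:boundUnPrime} produces a uniform $\kappa_1$, and then into \cref{eq:boundUnDoublePrime} a uniform $\kappa_2$.
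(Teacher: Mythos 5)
Your treatment of \cref{eq:boundUnPrime} and of the uniform bounds \cref{eq:boundUniformUnDiff} coincides with the paper's: take norms in the identity $u_n' = -u_n + P_nWF(u_n) + P_n\xi$ using \cref{lem:nemytskii}, and then harvest the boundedness of $\{\|P_nW\|\}$, $\{\|u_n\|_{C(J,\XSet)}\}$, $\{\|P_n\xi\|_{C(J,\XSet)}\}$ and $\{\|P_n\xi'\|_{C(J,\XSet)}\}$ from the arguments in \cref{thm:convergence}. Where you genuinely diverge is the second-derivative step. The paper asserts that the Hammerstein operator $K=WF$ is Fr\'echet differentiable on $\XSet$, with $K'(u)v = \int_\Omega w(\blank,y)f'(u(y))v(y)\,dy$, and differentiates $t\mapsto P_nK(u_n(t))$ by the chain rule; you instead run a difference-quotient argument that exploits the finite-dimensionality of $\XSet_n$ and the $C^1$ regularity of the coefficients $a_j$. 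Your remark that the Nemytskii operator $F$ alone is generally \emph{not} Fr\'echet differentiable on $L^2(\Omega)$ is correct, and it is precisely why the paper differentiates the composition $WF$ rather than $F$: the smoothing by $W$ is what rescues differentiability. This points to the one soft spot in your version: in the $C(\Omega)$ case, \cref{hyp:firingRate} asks only for pointwise differentiability of $f$, not continuity of $f'$, so the difference quotients of $t\mapsto F(u_n(t))$ converge pointwise in $x$ but need not converge in the supremum norm; ``arguing uniformity in $x$ directly'' is not available. The repair is to differentiate $t\mapsto WF(u_n(t))$ as a whole, exactly as in your $L^2$ argument: the integration against $w(x,\blank)$ converts your dominated pointwise convergence into uniform convergence in $x$ via \cref{hyp:kernel}. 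With that adjustment both routes arrive at the same identity $u_n'' = -u_n' + P_nW[f'(u_n)u_n'] + P_n\xi'$, the same constant $L_n = 1+\beta_n/T$, and the same conclusion $u_n\in C^2(J,\XSet)$; your route is more elementary and makes explicit a regularity subtlety that the paper glosses over, at the cost of a case analysis in $\XSet$.
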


\section{Examples of concrete projection
methods}\label{sec:examplesProjectionSchemes}

We now give examples of several projection methods, and corresponding estimates on
the convergence speed, showcasing the applicability of~\cref{thm:convergence}.

\subsection{Collocation method}\label{sec:collocationIntro}
For this scheme we set $\big( \XSet, \| \blank \| \big) = \big( C(\Omega), \| \blank \|_\infty \big)$ and
\[
  \phi_j = \ell_j, \qquad j \in \ZSet_{n+1}, \qquad n \in \NSet,
\]
where $\ell_j$ is the $j$th Lagrange interpolation polynomial with
nodes $\{x_j \colon j\in \NSet_{n+1}\}$, hence
\[
  \ell_i(x_j) = \delta_{ij}, \qquad i,j \in \ZSet_{n+1}.
\]
We introduce the spaces $\XSet_n = \spn\{\ell_0, \ldots, \ell_{n}\}$ with
dimensions $s(n) = n+1$, $n\in \NSet$ and the following family of operators
\begin{equation}\label{eq:collocationProj}
  P_nv = I_n v := \sum_{j \in \ZSet_{s(n)}} v(x_j) \ell_j,
  \qquad n \in \NSet.
\end{equation}
The operators above are a family of \emph{interpolatory projections} $\{ P_n \}$ from $\XSet$ to
$\XSet_n$ (from $C(D)$ to $\XSet_n$), for which we recall, without proof, the
following results~\cite[Section 12.1]{atkinson2005theoretical}:
\begin{proposition}\label{prop:interpProj} 
  Let $n \in \NSet$, and let $P_n$ be defined by \cref{eq:collocationProj}. Then $P_n
  \in BL(\XSet,\XSet_n)$ with
  \[
    \| P_n \|_{BL(\XSet,\XSet_n)} = \max_{x \in \Omega} \sum_{j \in \ZSet_{s(n)}}
    |\ell_j(x)| .
  \]
  Furthermore, for all $v \in \XSet$ we have
\[
  (P_n v)(x_i) = v(x_i), \qquad \text{for all $i \in \ZSet_{s(n)}$}.
\]
In addition, if $v \in \XSet$ then $P_n v = 0$ if, and only if, $(P_n v)(x_i) =
0$ for all $i \in \ZSet_{s(n)}$.
\end{proposition}
\Cref{prop:interpProj} shows that, in a collocation method, the abstract scheme
\[
  \begin{aligned}
    & P_n r(u_n(t)) = 0, & t \in J, \\
    & P_n \big( u_n(t_0)-u(t_0) \big) = 0, &
  \end{aligned}
\]
is equivalent to
\[
  \begin{aligned}
    & P_n r(u_n(t))(x_i) = 0, && i \in \ZSet_{s(n)}, \quad t \in J,  \\
    & P_n \big( u_n(t_0)-u(t_0) \big)(x_i) = 0, && i \in \ZSet_{s(n)}.
  \end{aligned}
\]
The two formulations above give rise to two equivalent $s(n)$-dimensional
evolution equations. The former leads to \cref{eq:projCauchy}, a Cauchy problem in
$\XSet_n$ which we used in \cref{sec:convergence_projection_method} to prove
convergence results. Using \cref{eq:basisDecomp} the latter system
gives\footnote{System~\cref{eq:CollocationODE} is the following set of approximating
  ODEs, in disguise
\[
    u'(x_i,t) \approx -u(x_i,t) + 
    \int_\Omega w(x_i,y) f\Bigg( \sum_{j \in s(n)} u(x_j,t) \ell_j(y)\Bigg)\, dy \qquad 
    u(x_i,0) = u_0(x_i) \qquad i \in \NSet_{s(n)},
\]
The latter formulation is possibly more directly relatable to \cref{eq:NF}, at a
first read.}
\begin{equation}\label{eq:CollocationODE}
  \begin{aligned}
    a'_i(t) &= -a_i(t) + N\bigg(t, \sum_{j \in \NSet_{s(n)}} a_j(t) \phi_j \bigg)(x_i),
		&& (i,t) \in \NSet_{s(n)} \times J, \\
    a_i(t_0) &= u_0(x_i),
		&& i \in \NSet_{s(n)},
  \end{aligned}
\end{equation}
that is, a Cauchy problem in $\RSet^{s(n)}$, which is useful for implementing the
scheme.


Different choices of the Lagrange interpolant and interpolation nodes give rise to
schemes with different properties. We discuss here
two families of schemes: (i) one where $\Omega$ is decomposed into elements $\Omega_i$,
and a local Lagrange interpolant is used (Finite-Element Collocation scheme); (ii)
one where interpolants are defined globally on $\Omega$ (Spectral Collocation
scheme). This treatment combines \cite{atkinson1997,atkinson2005theoretical} to
\cref{thm:convergence}.

\subsubsection{An example of Finite-Elements Collocation Method}\label{sec:FECollExample}
As a first example, we consider a piecewise-polynomial method (or finite-element
method). We decompose the domain $\Omega$ into elements $\Omega = \cup_{i \in
\NSet_n} \Omega_i$, and approximate $u \in C(\Omega)$ with piecewise polynomials with
local support. The functional setup for this scheme is  $\big( \XSet, \| \blank
\|\big) = \big( C(\Omega), \| \blank \|_\infty \big)$. We illustrate this method on a 1D domain $\Omega =
[-1,1] \subset \RSet$ on which we define a grid of $n+1$ points with mesh size $h_x(n) =
2/n$, as follows
\begin{equation}\label{eq:FECollGrid}
  x_i = i h_x, \qquad i \in \ZSet_{n+1}, \qquad \Omega_i = [x_{i-1},x_{i}], \qquad i \in \NSet_{n}.
\end{equation}
We approximate $u \in C(\Omega)$, with the classical shifted tent (piecewise linear)
functions, 
\begin{equation}\label{eq:FECollBasis}
  \ell_i (x) = 
  \begin{cases}
    \displaystyle{\frac{x - x_{i-1}}{x_i-x_{i-1}}} & \text{if $x \in [x_{i-1},x_i]$,} \\[1em]
    \displaystyle{\frac{x_{i+1} - x}{x_{i+1}-x_i}} & \text{if $x \in [x_{i},x_{i+1}]$,} \\[1em]
    0 & \text{otherwise.} 
  \end{cases}
  \end{equation}
  with adjustments for $\ell_0$ and $\ell_n$, which are supported on $[x_0,x_1]$ and
  $[x_{n-1},x_n]$, respectively. The functions $\{\ell_i\}$ form a Lagrange basis in
  that $\ell_i(x_j) = \delta_{ij}$. We take $\XSet_n =
\spn\{\ell_0, \ldots, \ell_n\}$, the space of all continuous piecewise-linear
functions on $\Omega$ with breakpoints $\{x_i \colon i \in \ZSet_{n+1}\}$, which has
dimension $s(n) = n+1$. We define the associated projector as
\begin{equation}\label{eq:FECollProj}
  P_n \colon \XSet \to \XSet_n, \qquad (P_nv)(x) =  (I_nv)(x) = \sum_{j \in \ZSet_{n+1}} v(x_j) \ell_j(x).
\end{equation}
The operator $P_n$ is an interpolatory projector at the nodes $\{x_i\}$, for which
the following bounds are known~\cite[Section 3.2.3]{atkinson2005theoretical}
\begin{equation}\label{eq:FECollProjBound}
  \| v - P_n v\|_\infty =
  \| v - I_n v\|_\infty \leq 
   \begin{cases}
     \omega(v,h_x),       & \text{if $v \in C(\Omega)$,} \\[0.5em]
     \displaystyle{\frac{h_x^2}{8} \| v'' \|_\infty}, & \text{if $v \in C^2(\Omega)$,} 
   \end{cases} 
\end{equation}
where $\omega$ is the modulus of continuity of $u$. 

The collocation finite-element method derived from $P_n$ is given by
\[
  \begin{aligned}
    a'_i(t) &= -a_i(t) + \sum_{j \in \NSet_{n}} \! \! \int_{\Omega_j}
    w(x_i,y) f\bigg(\sum_{k\in \ZSet_{n+1}} a_k(t) \ell_k(y)\bigg) \, dy + \xi(x_i,t),
		&& i \in \ZSet_{n+1}, \\
    a_i(t_0) &= u_0(x_i)
		&& i \in \ZSet_{n+1}.
  \end{aligned}
\]
where the integrals are taken over the elements $\Omega_i$. 

We can apply directly
\cref{thm:convergence}, and obtain the following convergence result.
\begin{corollary}[Convergence of the Finite-Element Collocation Scheme]\label{cor:convFEColl}
  Assume the hypotheses of \cref{thm:convergence} or
  \cref{thm:convergenceForward}, fix $\XSet=C^2(\Omega)$, and let 
  $P_n = I_n$ be given by \crefrange{eq:FECollGrid}{eq:FECollProj}. For any solution $u$
  to \cref{eq:Cauchy}, and $u_n$ to \cref{eq:projCauchy} it holds
  \begin{equation}\label{eq:FECollConv1}
  \| u - u_n \|_{C(J,\XSet)}  \to 0, \qquad \text{as $n \to \infty$}.
  \end{equation}
  If, in addition, $u \in C(J,C^2(\Omega))$ then there exists a constant $\kappa_u >0 $,
  dependent on $u$ but not on $n$, such that
  \begin{equation}\label{eq:FECollConv2}
    \| u - u_n \|_{C(J,\XSet)}  \leq \kappa_u h_x^2 \in O(n^{-2}) \quad \text{as $n \to
    \infty$}.
  \end{equation}
\end{corollary}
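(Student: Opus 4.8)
The plan is to obtain \cref{cor:convFEColl} as a direct application of \cref{thm:convergence}, placing the scheme squarely in its \textbf{Case 1}: once the interpolatory projector $P_n = I_n$ is shown to converge pointwise on the whole ambient space and $\|P_n W\|$ is shown to be uniformly bounded, the theorem reduces the spatiotemporal error $\|u - u_n\|_{C(J,\XSet)}$ to the purely spatial projection error $\|u - P_n u\|_{C(J,\XSet)}$, whose rate is supplied by the two interpolation bounds in \cref{eq:FECollProjBound}. Here I read the ambient space as $\big( C(\Omega),\|\blank\|_\infty \big)$, the $C^2$ label merely signalling the regularity of $u$ exploited in the second estimate; it cannot denote the true ambient space, since the piecewise-linear iterate $u_n$ is not even $C^1$.

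First I would verify the two structural hypotheses of \cref{thm:convergence}. For the uniform bound on $\|P_n W\|$, note that the tent functions in \cref{eq:FECollBasis} form a nonnegative partition of unity, so at every $x$ at most two of them are nonzero and they sum to $1$; hence $\sum_{j} |\ell_j(x)| \equiv 1$ and, by \cref{prop:interpProj}, $\|P_n\| = 1$ for every $n$. Consequently $\|P_n W\| \leq \|P_n\|\,\|W\| \leq \kappa_w$, so one may take $p = \kappa_w$ and $n_0 = 0$, whence $\beta_n \leq \beta := T \kappa_w \|f'\|_\infty$ uniformly in $n$. For the pointwise convergence, the first line of \cref{eq:FECollProjBound} gives $\|v - P_n v\|_\infty \leq \omega(v,h_x)$; since $\Omega$ is compact every $v \in C(\Omega)$ is uniformly continuous, so $\omega(v,h_x) \to 0$ as $h_x = 2/n \to 0$, i.e. $P_n v \to v$ for all $v \in \XSet$. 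These are exactly the premises of statement~1 of \cref{thm:convergence}, which yields $\|u - u_n\|_{C(J,\XSet)} \to 0$ and proves \cref{eq:FECollConv1}.

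For the quantitative estimate \cref{eq:FECollConv2} I would invoke the second line of \cref{eq:FECollProjBound}. Assuming $u \in C(J,C^2(\Omega))$, the map $t \mapsto u(\blank,t)$ is continuous from the compact interval $J$ into $C^2(\Omega)$, so its image is compact and $M_2 := \max_{t \in J}\|\partial_{xx} u(\blank,t)\|_\infty < \infty$. Applying the $C^2$ bound pointwise in $t$ and taking the maximum over $J$ gives $\|u - P_n u\|_{C(J,\XSet)} \leq (h_x^2/8)\,M_2$. Feeding this into the upper bound of \cref{eq:sandwich}, together with $\beta_n \leq \beta$ from the previous step, yields $\|u - u_n\|_{C(J,\XSet)} \leq e^{\beta}(h_x^2/8)M_2 =: \kappa_u h_x^2$, with $\kappa_u = e^{\beta} M_2 / 8$ independent of $n$; since $h_x^2 = 4/n^2$ this is $O(n^{-2})$.

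The only genuine subtlety, and the single place where care is needed, is guaranteeing that $\kappa_u$ does not secretly depend on $n$: this hinges entirely on the $n$-independent bound $\|P_n W\| \leq \kappa_w$, which freezes $\beta_n$ and hence the exponential prefactor $e^{\beta}$. The interpolation estimates themselves are quoted from \cref{eq:FECollProjBound}, so no new approximation-theoretic work is required, and the corollary is essentially an assembly of \cref{thm:convergence}, the Lebesgue-constant identity $\|P_n\| = 1$, and the compactness of $J$.
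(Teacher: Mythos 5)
Your proof is correct and follows the same architecture as the paper's: qualitative convergence via Case 1 of \cref{thm:convergence}, using the modulus-of-continuity line of \cref{eq:FECollProjBound} to get $P_n v \to v$ on all of $C(\Omega)$, and then the rate via the upper bound of \cref{eq:sandwich} combined with the $C^2$ interpolation estimate applied pointwise in $t$. The one place you diverge is in the uniform bound on $\beta_n$: the paper argues abstractly that $P_n v \to v$ for all $v$ forces $\| W - P_n W \| \to 0$ by compactness of $W$, so that $\beta_n$ converges and is therefore bounded, whereas you compute the Lebesgue constant $\| P_n \| = 1$ from the partition-of-unity property of the tent basis and use $\| P_n W \| \leq \| P_n \| \, \| W \| \leq \kappa_w$; your route is more elementary, verifies the hypothesis on $\| P_n W \|$ in the theorem statement explicitly, and yields the concrete constant $\beta = T \kappa_w \| f' \|_\infty$, while the paper's argument is the one that generalises to projectors whose operator norms are not known in closed form. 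Your reading of ``$\XSet = C^2(\Omega)$'' as the ambient space $C(\Omega)$ with additional regularity assumed only on $u$ matches the paper's intent, since its own proof works throughout in $C(J,C(\Omega))$.
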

\begin{proof}
  By \cref{eq:FECollProjBound} we conclude that $P_n v \to v$ as $n \to \infty$ for
  all $v \in \XSet = C(\Omega)$. We are in Case 1 on page \pageref{item:scenario1}, and statement 1 of
  \cref{thm:convergence} (or \cref{thm:convergenceForward}) gives \cref{eq:FECollConv1}. 

  Let us now apply \cref{eq:sandwich} for a fixed $n \in \NSet$. Reasoning as in the
  proof of \cref{thm:convergence} (see discussion leading to \cref{eq:boundSpeed}),
  since $P_n v \to v$ for all $v \in C(\Omega)$, then $\beta_n$ is convergent, and
  hence bounded by a constant $\beta$. It holds 
  \[
    \| u - u_n \|_{C(J,C(\Omega))} \leq e^{\beta_n} \| u - P_n u\|_{C(J,C(\Omega))} \leq
    e^{\beta} \max_{t \in J} \| u (t) - P_n u(t) \|_\infty.
  \] 
  Under the hypothesis $u(t)\in C^2(\Omega)$ for all $t \in J$ we estimate, using
  \cref{eq:FECollProjBound},
  \[
      \| u - u_n \|_{C(J,C(\Omega))} 
		 \leq \frac{ e^\beta h_x^2}{8} 
		 \max_{t \in J} \bigg\| \frac{\partial^2
		 u(\blank,t)}{\partial x^2} \bigg\|_\infty
		 =: \kappa_u h_x^2.
  \]
\end{proof}

In \cref{sec:model}, we anticipated that the error bounds found in the projection
schemes are independent of quadrature schemes, and we can now see this in action. Concrete
implementations of this projection scheme require the choice of a quadrature rule to
approximate the integrals over the finite elements $\Omega_i$, in the variable $y$.
Following the classification in \cite{atkinson2005theoretical,Chen:2015vv}, a scheme
making such choice is a \textit{discrete projection scheme} (a
discrete collocation scheme in this case). 

The bound in \cref{cor:convFEColl}, however, shows that one can assess convergence of
the scheme \textit{before picking a quadrature rule}: the bound has a term in
$h_x$ which pertains only to the projector $P_n$. This implies that care must be
taken so that the quadrature scheme converges at the same rate as
the projector, as expressed by \cref{eq:FECollProjBound}: slower convergence rate in the
quadrature would degrade the rate \cref{eq:FECollConv2}, and faster quadrature rates
would be wasteful, as the $O(h_x^2)$ error of the projector would dominate the
quadrature error. We shall exemplify this phenomenon in \cref{sec:numerics}.

In addition, once the discrete collocation finite element method is written, the
corresponding initial-value problem must be solved introducing a time-stepping
scheme. An example of such analysis will also be given in operator form, without
invoking quadrature, in \cref{sec:timeStepper}. 

\subsubsection{An example of Spectral Collocation Method} \label{sec:SpecCollExample}
To exemplify the Spectral Collocation scheme we consider a neural field posed on
$\Omega = [-1,1] \subset \RSet$, and we use a Lagrange interpolating polynomial with Chebyshev node
distribution (also known as Chebyshev interpolant), which has spectral
convergence rates for smooth functions~\cite{Berrut2004,trefethen2019approximation}.
We consider Chebyshev points and the associated Lagrange basis
\begin{equation}\label{eq:ChebyInterpBasis}
  x_i = \cos \frac{i\pi}{n}, 
  \qquad
  \ell_i(x) = \prod_{\{j \in \ZSet_{n+1} \colon j \neq i\}} \frac{x-x_j}{x_i-x_j}, 
  \qquad 
  i \in \ZSet_{n+1},
\end{equation}
and construct the interpolatory projector
\begin{equation}\label{eq:ChebIntProj}
  P_n \colon \XSet \to \XSet_n, \qquad (P_nv)(x) =(I_nv)(x) = \sum_{j \in \ZSet_{n+1}} v(x_j) \ell_j(x).
\end{equation}
The spectral Chebyshev collocation method derived from $P_n$ is given by
\begin{equation}\label{eq:SpecCollCheb}
  \begin{aligned}
    a'_i(t) &= -a_i(t) + \int_{\Omega}
    w(x_i,y) f\bigg(\sum_{k\in \ZSet_{n+1}} a_k(t) \ell_k(y)\bigg) \, dy + \xi(x_i,t),
		&& i \in \ZSet_{n+1}, \\
    a_i(t_0) &= u_0(x_i)
		&& i \in \ZSet_{n+1}.
  \end{aligned}
\end{equation}
where the integrals are taken over the full domain $\Omega$. In spite of the
similarity with the Finite-Element collocation scheme, the Spectral Collocation
scheme requires a separate treatment. \Cref{eq:FECollProj,eq:ChebyInterpBasis}
look similar, but their convergence properties differ, in that the
underlying Lagrange basis $\{\ell_i\}$ is different. While for the former
$P_nv \to v$ for all $v \in \XSet$, this property
does not hold for the latter. It is known that, for $P_n$ defined by
\crefrange{eq:ChebyInterpBasis}{eq:ChebIntProj}
\begin{equation}\label{eq:auxCheb}
  \| v - P_n v\|_\infty = (2 + 2/\pi \log n) \| v - p_n^*\|_\infty
\end{equation}
where $p^*_n$ is the best approximation polynomial of degree $n$ to $v$ on
$[-1,1]$ (\cite[Theorem 2.1]{Battles.2004}), implying\footnote{From $v - P_n v =
  v-p_n^* - P_n(v-p^*_n)$ we have $\| v- P_n v\|_\infty \leq (1+\| P_n\|) \| v -
  p^*_n\|_\infty$, which combined with \cref{eq:auxCheb} gives $\| P_n \| = 1 +2 /
\pi \log n$.}
\[
  \| P_n \| = 1 +2 / \pi \log n.
\]
The Principle of Uniform Boundedness guarantees the existence of $v \in \XSet$ for which $P_n
v$ does not converge to $v$ and hence, by \cref{thm:convergence} there are solutions
$u$ to the neural field problem for which $u_n$ does not converge to $u$ in
$C(J,\XSet)$. We are therefore in Case 2, on page \pageref{item:scenario2}. The
following result shows that convergence is however ensured for problems with
sufficiently regular synaptic kernel $w$, initial solution $u_0$, and forcing term
$\xi$.

\begin{corollary}[Convergence of spectral Chebyshev collocation scheme]\label{cor:ChebCollConv}
  Assume the
  hypotheses of \cref{thm:convergence} or \cref{thm:convergenceForward}, fix
    $\Omega = [-1,1]$, $\XSet = C([-1,1])$, and let $P_n$ be
  given by
  \crefrange{eq:ChebyInterpBasis}{eq:ChebIntProj}, then:
  \begin{enumerate}
    \item If $u_0(x)$, $w(x,y)$, and $\xi(x,t)$ are differentiable $m$ times in
	$x$
	and $u_0^{(m)}$ is $\alpha$-H\"older continuous, $\partial_x^m w$ 
      is $\alpha$-H\"older continuous with respect to $x$ uniformly in $y$, and
      $\partial_x^m \xi$ 
      is $\alpha$-H\"older continuous with respect to $x$ uniformly in $t$,
      respectively,
      then 
      \[
	\|u - u_n\|_{C(J,\XSet)} \in O\bigg(\frac{\log n}{n^{m+\alpha}}\bigg) \qquad
	\text{as $n \to \infty$.}
      \]

    \item If there is an $m\geq 1$ such that
	$w(x,y)$ has an $m$th derivative of bounded variation in $x$ on $[-1,1]$
	  for all $y \in [-1,1]$,
	$u_0$ has an $m$th derivative of bounded variation on $[-1,1]$, and
	$\xi(x,t)$ has an $m$th derivative of bounded variation in $x$ on $[-1,1]$ for all $t \in J$,
      then
      \[
	\|u - u_n\|_{C(J,\XSet)} \in O(n^{-m}) \qquad \text{as $n \to \infty$.}
      \]
  \end{enumerate}
\end{corollary}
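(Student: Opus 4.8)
The plan is to place both statements within Case 2 (page \pageref{item:scenario2}), which is exactly the regime identified in the discussion preceding the corollary: the Lebesgue constants $\| P_n \| = 1 + (2/\pi)\log n$ are unbounded, so $P_n v \to v$ fails for some $v$, and \cref{thm:convergence} only guarantees convergence for sufficiently regular data. In this regime the rate of $\| u - u_n \|_{C(J,\XSet)}$ is controlled by $\alpha_n$ of \cref{eq:alphaDef} through the composite estimate $\| u - u_n \|_{C(J,\XSet)} \le \alpha_n e^{\beta}$, valid as soon as $\beta_n$ of \cref{eq:betaDef} stays bounded. Consequently the proof reduces to three interpolation-error estimates — for $\| u_0 - P_n u_0 \|_\infty$, for $\| \xi - P_n \xi \|_{C(J,\XSet)}$, and for the operator error $\| W - P_n W \|$ — together with the verification that $\beta_n$ is bounded.

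First I would dispatch the two data terms using one-dimensional Chebyshev interpolation theory. For statement~1, Jackson's theorem gives $\| v - p_n^* \|_\infty \in O(n^{-(m+\alpha)})$ whenever $v^{(m)}$ is H\"older-$\alpha$, which combined with the Lebesgue-constant relation \cref{eq:auxCheb} yields $\| v - P_n v \|_\infty \in O(\log n / n^{m+\alpha})$; for statement~2, the classical bounded-variation estimate gives $\| v - P_n v \|_\infty \in O(n^{-m})$ directly (see \cite{trefethen2019approximation}). Applying these with $v = u_0$, and uniformly in $t \in J$ with $v = \xi(\blank,t)$, produces the claimed rates for the first and third contributions to $\alpha_n$, the uniformity in $t$ following from the fact that the regularity hypotheses on $\xi$ are assumed uniform in $t$.

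The main step, and the only one requiring genuine care, is the operator term $\| W - P_n W \|$. I would unfold $P_n W$ as \emph{apply $W$, then interpolate in the first variable}: writing $P_n^x$ for Chebyshev interpolation acting in $x$, one has
\[
  \big( (W - P_n W) u \big)(x) = \int_{-1}^1 \big[ w(x,y) - (P_n^x w)(x,y) \big]\, u(y)\, dy,
\]
so that $W - P_n W$ is itself an integral operator on $C(\Omega)$ whose kernel is the $x$-interpolation error of $w$. By the $C(\Omega)$ operator-norm formula of \cref{lem:W},
\[
  \| W - P_n W \| = \max_{x \in [-1,1]} \int_{-1}^1 \big| w(x,y) - (P_n^x w)(x,y) \big|\, dy \le 2 \sup_{y \in [-1,1]} \big\| w(\blank,y) - P_n w(\blank,y) \big\|_\infty .
\]
The hypotheses on $w$ are phrased precisely so that the one-dimensional bounds above apply to $x \mapsto w(x,y)$ uniformly in $y$, giving $\| W - P_n W \| \in O(\log n / n^{m+\alpha})$ under statement~1 and $\in O(n^{-m})$ under statement~2; in particular $\| W - P_n W \| \to 0$, whence $\| P_n W \| \to \| W \|$ and $\beta_n$ is bounded by some $\beta$. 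The anticipated obstacle is exactly this term: one must recognise the commutation structure that turns $W - P_n W$ back into an integral operator with the interpolation-error kernel, and then transfer the pointwise, uniform-in-$y$ approximation estimates through the $\max_x \int_y$ norm. Collecting the three estimates into $\alpha_n$ and invoking $\| u - u_n \|_{C(J,\XSet)} \le \alpha_n e^{\beta}$ from \cref{eq:boundSolProj,eq:sandwich} then yields the two stated rates.
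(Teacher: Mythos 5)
Your proposal is correct and follows essentially the same route as the paper: both reduce the claim to bounding the three ingredients of $\alpha_n$ via Jackson-type (resp.\ bounded-variation) Chebyshev interpolation estimates multiplied by the Lebesgue constant, identify $W - P_nW$ as the integral operator whose kernel is the $x$-interpolation error of $w$ and bound its $C(\Omega)$ operator norm by the uniform-in-$y$ one-dimensional estimate, and conclude with $\|u-u_n\|_{C(J,\XSet)} \le e^{\beta}\alpha_n$ once $\|W-P_nW\|\to 0$ forces $\beta_n$ to be bounded. No substantive differences.
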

  \begin{proof}
    The proof straightforwardly adapts arguments in \cite[Section 3.2]{atkinson1997}
    to the case of Chebyshev polynomials. In this
    proof, the symbol $\kappa$ denotes a constant independent of $n$ that may assume
    different values in different passages. We begin by proving part 1 of the
    corollary. We estimate 
    \[
      \begin{aligned}
	\| u_0 - P_n u_0 \|_\infty \leq (1 + \| P_n\|) \| u_0 - u^*_0\|_\infty 
				   & =  (2 + 2/\pi \log n) \| u_0 - u^*_0\|_\infty \\
				   & \leq (2 + 2/\pi \log n) \frac{\kappa}{n^{m+\alpha}}
      \end{aligned}
    \]
    where the last bound is a consequence of the Jackson's theorem and the H\"older
    condition on $u_0^{(m)}$. A
    similar strategy is used to bound
    \[
      \begin{aligned}
	\| W - P_n W \| & = \max_{x \in [-1,1]} \int_{-1}^1 
		    \bigg| w(x,y) - \sum_{i \in \ZSet_{n+1}} w(x_i,y)\ell_i(x) \bigg| \,dy \\
                        & := \max_{x \in [-1,1]} \int_{-1}^1 
		    | w(x,y) - w_n(x,y) | \,dy.
      \end{aligned}
    \]
    For fixed $y$, we apply Jackson's theorem to bound $w(\blank,y) - w_n(\blank,y)$,
    and we use the fact that the H\"older condition on $\partial_x^m w$ holds
    uniformly in $y$:
    \[
      | w(\blank,y) - w_n(\blank,y) | \leq \| w(\blank,y) - w_n(\blank,y) \|_\infty
      \leq (2+2/\pi \log n) \frac{\kappa}{n^{m+\alpha}}
    \]
    hence
    \[
      \| W - P_n W \| \leq (2+2/\pi \log n) \frac{\kappa}{n^{m+\alpha}}.
    \]
    A similar argument gives 
    \[
      \|\xi - P_n \xi\|_{C(J,X)} = \max_{t \in J} \| \xi(\blank,t) - P_n
      \xi(\blank,t)\|_\infty \leq (2+2/\pi \log n) \frac{\kappa}{n^{m+\alpha}}.
    \]
    We can now apply \cref{thm:convergence}: the bounds above imply $\alpha_n \in
    O(n^{-(m+\alpha)} \log n)$; further, since $\| W - P_n W\| \to 0$ as $n \to
    \infty$ the sequence $\{\beta_n\}$ is bounded. We deduce
      \[
	\|u - u_n\|_{C(J,\XSet)} \leq e^\beta \alpha_n \in O\bigg(\frac{\log n}{n^{m+\alpha}}\bigg) \qquad
	\text{as $n \to \infty$.}
      \]
  Part 2 of the statement is proved in a similar way to part 1, and we will only
  sketch it for the sake of brevity: since $u^{(m)}_0$ is of bounded variation, then
  $\| u^{(m)}_0 - P_n u^{(m)}_0\|_\infty \in O(n^{-m})$ as $n \to \infty$
  (see \cite[Theorem 2.1]{Battles.2004} and references therein). A similar statement
  holds for $\| W - P_n W \|$ and $\| \xi - P_n \xi \|_{C(J,\XSet)}$, and a further
  application of \cref{thm:convergence} or \cref{thm:convergenceForward} gives the
  assert.
  \end{proof}

\subsection{Galerkin method}
We now return to the abstract projection scheme \cref{eq:projCauchy}, and
discuss specialisations of the projector that leads to Galerkin schemes, rather than
collocation schemes.

For the Galerkin scheme we set $\big( \XSet, \| \blank\| \big) = \big( L^2(\Omega),
\| \blank \|_{L^2(\Omega)} \big)$,  a Hilbert space with inner product $\langle
\blank, \blank \rangle = 
\langle \blank, \blank \rangle_{L^2(\Omega)}$. The method uses
\emph{orthogonal projection operators}, defined by
\begin{equation}\label{eq:orthoProj}
  \langle P_n u, v \rangle = \langle u , v \rangle \qquad \text{ for all $u \in
  \XSet$ and $v \in \XSet_n$}
\end{equation}
We recall, without proof, a few properties of the orthogonal projectors,
see~\cite[Proposition 3.6.9]{atkinson2005theoretical} and~\cite[Section
2.2.1]{Chen:2015vv}.
\begin{proposition}\label{prop:orthogonalProj}
  Let $n \in \NSet$, and let $P_n$ be defined by \cref{eq:orthoProj}. Then $P_n \in
  BL(\XSet,\XSet_n)$, with $\| P_n
  \|_{BL(\XSet,\XSet_n)} = 1$. Furthermore, for all $v \in \XSet$ we have
\[
  \| v - P_n v \| = \min_{z \in \XSet_n} \|v - z\|.
\]
In addition, let $\{\phi_i \colon i \in \NSet_{s(n)} \}$ be a basis for $\XSet_n$. If $u
\in \XSet$, then $P_n u = 0$ if, and only if, $\langle u,\phi_i \rangle = 0$ for all
$i \in \NSet_{s(n)}$. 
\end{proposition}

\Cref{prop:orthogonalProj} shows that, in a Galerkin method, the abstract
projection scheme
\[
  \begin{aligned}
    & P_n r(u_n(t)) = 0, & t \in J, \\
    & P_n \big( u_n(t_0)-u(t_0) \big) = 0, &
  \end{aligned}
\]
is equivalent to
%
\begin{equation} \label{eq:GalerkinWithR}
  \begin{aligned}
   & \langle r_n(t), \phi_i \rangle = 0, && i \in \NSet_{s(n)}, \quad t \in J, \\
   & \langle u(t_0) - u_0, \phi_i \rangle =0, && i \in \NSet_{s(n)}.
  \end{aligned}
\end{equation}

As for the collocation method, we obtain two equivalent $s(n)$-dimensional
evolution equations. The former formulation is, once again, \cref{eq:projCauchy},
while the latter is a Cauchy problem in $\RSet^{s(n)}$, useful in numerical implementations,
\begin{equation}\label{eq:GalerkinODEAlpha}
  \begin{aligned}
  \sum_{j \in \NSet_{s(n)}} \langle \phi_i,\phi_j \rangle a'_j(t) & = 
  \bigg\langle N\bigg(t, \sum_{j \in \NSet_{s(n)}} a_j(t) \phi_j \bigg), \phi_i \bigg\rangle,
    && (i,t) \in \NSet_{s(n)} \times J, \\
    a_i(t_0) & = \langle u_0, \phi_i \rangle, 
    && i \in \NSet_{s(n)}.
  \end{aligned}
\end{equation}

Like Collocation methods, Galerkin methods are also split in two families: (i) Galerkin Finite Element
methods, in which $\Omega$ is decomposed in finite elements $\Omega_i$, and
locally-supported polynomials are employed; (ii) Spectral Galerkin methods, in
which global polynomials are used.

\subsubsection{An example of Finite Element Galerkin Method}
\label{sec:FEGalerkinExample}
We take $\XSet = L^2(-1,1)$, $\Omega_i$ as in \cref{eq:FECollGrid}, and $\XSet_n =
\spn\{\ell_0,\ldots,\ell_n\} $, where $\ell_i$ is the shifted tent function
\cref{eq:FECollBasis} with $\supp \ell_i = \Omega_i \cup \Omega_{i+1}$ for $i \in
\NSet_{n}$, and $\supp \ell_i = \Omega_i$ for $i = \{0,n+1\}$. It can be shown (see
\cite[Section 3.3.1]{atkinson1997})
\[
  \| P_n \| = 1, \qquad \| v - P_n v \|_{L^2(-1,1)} \leq \sqrt{2} \omega(v,h_x(n)) \qquad
  \text{for all $v \in L^2(-1,1)$,}
\]
where $\omega$ is the modulus of continuity of $v$. Hence $P_nv \to v$ for every $v
\in C([-1,1])$. Owing to the density of $C([-1,1])$ in $L^2(-1,1)$, this implies $P_n v \to v$ for
all $v \in \XSet = L^2(-1,1)$, and we are hence in Case 1. The scheme is written as
\[
  \begin{aligned}
  \sum_{j \in \ZSet_{n+1}} \langle \ell_i,\ell_j \rangle a'_{j}(t) 
   = 
   &- \sum_{j \in \ZSet_{n+1}} \langle \ell_i,\ell_j \rangle a_{j}(t)
   + \int_{\supp \ell_i} \ell_i(x)\xi(x,t) \, dx \\
   & + \int_{\supp \ell_i} \!\!\!\!\!
     \ell_i(x) \int_\Omega w(x,y) f\bigg( \sum_{j \in \ZSet_{n+1}} a_j(t)
   \ell_j(y)\bigg) \, dy \,dx  
   & i \in \ZSet_{n+1}, \\
   \sum_{j \in \ZSet_{n+1}} \langle \ell_i, \ell_j \rangle a_j(t_0) 
    =& \int_{\supp \ell_i} \ell_i(x) u_0(x) \, dx
   & i \in \ZSet_{n+1}.
  \end{aligned}
\]
We note that the basis $\{\ell_i\}$ is not orthogonal but the matrix with components
$\langle \ell_i,\ell_j \rangle$, is sparse and tridiagonal \cite[Equation
12.2.21]{atkinson2005theoretical}.

Since $P_n v \to v$ for all $v \in \XSet$, using \cref{thm:convergence} one can
prove the analogous to \cref{cor:convFEColl} for this scheme. We conclude that $\|u -
u_n\|_{C(J,\XSet)} \to 0$ as $n \to \infty$.
%
Note that the ambient space for this scheme is $\XSet = L^2(\Omega)$ hence the result
above means
\[
  \max_{t \in J} \| u(t) - u_n(t) \|_{L^2(-1,1)} \to 0.
\]
If $u(t) \in C(J,C^2(\Omega))$,
then uniform bounds for the solution can be derived as follows:
\[
  \begin{aligned}
    \| u(t) - P_nu(t) \|_\XSet 
    & =  \min_{z \in \XSet_n} \|u(t) - z\|_{L^2(\Omega)} \\
    & \leq \| u(t) - I_nu(t) \|_{L^2(\Omega)} \\
    & \leq |\Omega|^{1/2}  \| u(t) - I_nu(t) \|_\infty \leq \kappa_u h^2_x \in
    O(n^{-2}),
  \end{aligned}
\]
where we have used the fact that the orthogonal projector $P_n$ minimises the
distance from $P_nu(t)$ to $u(t)$, that it differs from the interpolatory projector
$I_n$ of \cref{sec:FECollExample}, and that the latter satisfies the bound
\cref{eq:FECollProjBound} for $u(t) \in C^2(\Omega)$. The above considerations are
summarised in the following result.

\begin{proposition}\label{cor:convFEGalerkin}
  \cref{cor:convFEColl} holds for $\big( \XSet, \| \blank \|\big) = \big(
  L^2(\Omega), \| \blank\|_{L^2(\Omega)} \big)$, provided $P_n$ is the
  orthogonal projector on $L^2(\Omega)$ to $\spn\{\ell_0,\ldots,\ell_n\}$.
\end{proposition}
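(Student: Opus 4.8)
The plan is to transfer both conclusions of \cref{cor:convFEColl} to the $L^2$ setting by re-running the general machinery of \cref{thm:convergence}, the only genuine change being that the projection error $\| u - P_n u\|_{C(J,\XSet)}$ must now be estimated in the $L^2$ norm rather than the supremum norm. First I would verify that we are again in Case 1, i.e.\ that $P_n v \to v$ for every $v \in \XSet = L^2(\Omega)$. The bound $\| v - P_n v\|_{L^2(-1,1)} \leq \sqrt{2}\,\omega(v,h_x(n))$ recorded above gives $P_n v \to v$ on the dense subset $C([-1,1]) \subset L^2(-1,1)$; since $\| P_n\| = 1$ is uniformly bounded, a standard density argument extends pointwise convergence to all of $L^2(\Omega)$. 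With Case 1 secured, statement~1 of \cref{thm:convergence} immediately yields $\| u - u_n\|_{C(J,\XSet)} \to 0$, the $L^2$ analogue of \cref{eq:FECollConv1}.

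For the convergence rate I would proceed verbatim as in the proof of \cref{cor:convFEColl}: because $P_n v \to v$ for all $v$, the compactness of $W$ forces $\beta_n$ to converge and hence to be bounded by some $\beta$, so the upper bound in \cref{eq:sandwich} reads $\| u - u_n\|_{C(J,\XSet)} \leq e^{\beta}\,\| u - P_n u\|_{C(J,\XSet)}$. The crux is therefore to bound $\| u(t) - P_n u(t)\|_{L^2(\Omega)}$ for $u(t) \in C^2(\Omega)$. Here I would exploit the optimality of the orthogonal projector from \cref{prop:orthogonalProj}, namely $\| u(t) - P_n u(t)\|_{L^2(\Omega)} = \min_{z \in \XSet_n}\| u(t) - z\|_{L^2(\Omega)}$, and test this minimum against the interpolatory projector $I_n u(t) \in \XSet_n$ of \cref{sec:FECollExample}. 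This gives $\| u(t) - P_n u(t)\|_{L^2(\Omega)} \leq \| u(t) - I_n u(t)\|_{L^2(\Omega)} \leq |\Omega|^{1/2}\,\| u(t) - I_n u(t)\|_\infty$, and the last factor is controlled by the $C^2$ line of \cref{eq:FECollProjBound}, producing an $O(h_x^2)$ term. Taking the maximum over $t \in J$ and multiplying by $e^{\beta}$ then gives $\| u - u_n\|_{C(J,\XSet)} \leq \kappa_u h_x^2 \in O(n^{-2})$, the $L^2$ analogue of \cref{eq:FECollConv2}.

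The one point requiring care, and the closest thing to an obstacle, is the passage from a supremum-norm interpolation estimate to an $L^2$ projection estimate: one must not attempt to bound the orthogonal projection error directly, but rather dominate it by the interpolation error via the optimality in \cref{prop:orthogonalProj}, which is precisely where the inequality $\| \blank \|_{L^2(\Omega)} \leq |\Omega|^{1/2}\| \blank \|_\infty$ and the finiteness of $|\Omega|$ from \cref{hyp:domain} enter. Everything else is a faithful repetition of the collocation argument, so I would keep the write-up short and simply note that \cref{cor:convFEColl} holds \emph{mutatis mutandis}, pointing to the displayed chain of inequalities preceding the statement as the substitute for the supremum-norm bound used there.
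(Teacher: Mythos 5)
Your proposal is correct and follows essentially the same route as the paper: the density argument combined with $\|P_n\|=1$ to establish Case 1, and then the chain $\|u(t)-P_nu(t)\|_{L^2} = \min_{z\in\XSet_n}\|u(t)-z\|_{L^2} \leq \|u(t)-I_nu(t)\|_{L^2} \leq |\Omega|^{1/2}\|u(t)-I_nu(t)\|_\infty$ together with \cref{eq:FECollProjBound} is exactly the displayed computation preceding the proposition in the text. Nothing to add.
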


\subsubsection{An example of Spectral Galerkin
Method}\label{sec:SpectralGelerkinExample}
For an example of this scheme, we consider a neural field problem posed on a ring
which is a common choice in literature
\cite{Ermentrout.1998qno,Ermentrout.2010,Bresloff.2012,
Bressloff.2014k0p,coombes2014neural}. We consider the problem on $\XSet =
L^2(0,2\pi)$, the space of square-integrable functions on $(0,2\pi)$. 
We shall also assume that
the kernel $w(x,y)$ is $2\pi$-periodic in both variables, the forcing $\xi(x,t)$ is
periodic in $x$, and the initial condition $u_0(x)$ is $2\pi$
periodic. Instead of providing error bounds in a form of a theorem for this scheme
(they are similar to the ones found above), we present the arguments to derive them
when $\XSet = L^2(0,2\pi)$. We will also discuss how to derive
stronger uniform bounds in the space $C_p(2\pi)$, the space of continuous
$2\pi$-periodic functions. 

In the spatially-periodic case, a basis for the approximating space is the set of
$2n+1$ periodic functions 
$\{1,\sin x, \cos x,\ldots, \sin nx, \cos nx\}$. The analysis and calculations are
convenient if one transplants the problem on the space of complex-valued functions
$L^2((0,2\pi),\CSet)$, spanned by the equivalent basis $\phi_j(x) =
e^{ijx}$, for $j \in \ZSet_{\pm n}$. We therefore have $\XSet_n = \spn\{\phi_{-n}, \ldots,
\phi_n\}$ of dimension $s(n)= 2n+1$, and we use the natural orthogonal projector
\begin{equation}\label{eq:projSpecGalerkin}
  P_n \colon \XSet \to \XSet_n, \qquad (P_n v)(x) = \frac{1}{2\pi} \sum_{j \in
  \ZSet_{\pm n}}
  \langle v, \psi_j \rangle \psi_j(x).
\end{equation}
The basis is orthonormal, hence the spectral Galerkin method reads
\begin{equation}
  \begin{aligned}
  a'_{i}(t) 
   = 
   &- a_{i}(t)
   + \int_{\Omega} \phi_i(x) \bigg[ 
     \xi(x,t) +
   \int_\Omega w(x,y) f\bigg( \sum_{j \in \ZSet_{n+1}} a_j(t)
 \phi_j(y)\bigg) \, dy \bigg]^* \,dx \\
 a_i(t_0) = & \int_{\Omega} \phi_i(x) u^*_0(x) \, dx
  \end{aligned}
\end{equation}
for $i \in \ZSet_{n+1}$, where the asterisk denotes complex conjugation. Standard
convergence results on Fourier series are available~\cite{Canuto.2006}, ensuring $P_n
v \to v$ for all $v \in \XSet$. Hence convergence follows from
\cref{thm:convergence}. 
In addition, estimates on $\|P_n v - v\|$ exist for $v \in  H^r(2\pi)$, the
closure of $C_p(2\pi)$ under the inner product norm $\| \blank\|_{H^r(2\pi)}$ given
below~\cite[Section 5.1.2]{Canuto.2006}
\[
  \| P_n v - v\|_{\XSet} = \| P_n v - v\|_{L^2(0,2\pi)} \leq \frac{\kappa}{n^r} \| v
  \|_{H^r(2\pi)}, \qquad \| v\|^2_{H^r(2\pi)} = \sum_{j=0}^k \| v^{(j)}\|_{L^2(0,2\pi)}
\]
This implies that for solutions $u \in C(J,H^r(2\pi))$, the scheme converges with an $O(n^{-r})$ error, because
\cref{thm:convergence} gives
\[
  \begin{aligned}
  \| u - u_n \|_{C(J,\XSet)} 
  & \leq e^\beta \max_{t \in J} \| u(t) - P_n u(t) \|_{L^2(0,2\pi)} \\
  & \leq \frac{\kappa e^\beta}{n^r} \max_{t \in J} \| u(t) \|_{H^r(2\pi)} = \| u
  \|_{C(J,H^r(2\pi))} \frac{\kappa e^\beta}{n^r} \in O(n^{-r}).
  \end{aligned}
\]

Finding uniform bounds for solutions $u \in C(J,C_p(2\pi))$ is also possible, albeit
this takes us from Case 1 to Case 2: when the
projector \cref{eq:projSpecGalerkin} is on $C_p(2\pi)$ to $\XSet_n$, it is no longer
true that $P_n v \to v$ for all $v \in C_p(2\pi)$, because $\| P_n
\|_{BL(C_p(2\pi),\XSet_n)} \in O(\log n)$ ~\cite[Section
3.7.1]{atkinson2005theoretical}, and we no longer have $\|K-P_nK \| \to 0$, in
general. Similarly to what we have seen in \cref{sec:SpecCollExample}, we can assume
further regularity on the kernel $w$, and obtain convergence results analogous to
\cref{cor:ChebCollConv} which we omit for the sake of brevity (see also \cite[Section
12.2.4]{atkinson2005theoretical}).

\section{Time integrators} \label{sec:timeStepper}
The discussion in the previous sections concerned the approximation of solutions to 
the infinite-dimensional initial-value problem
\begin{equation}\label{eq:cauchy_rep_2}
  u' = N(t,u(t)), \quad t \in J, \qquad u(t_0) = u_0,
\end{equation}
that is, an ODE on $\XSet$, by means of solutions to the $n$-dimensional problem
\begin{equation}\label{eq:cauchy_proj_rep_2}
  u_n' = P_nN(t,u_n(t)), \quad t \in J, \qquad u_n(t_0) = P_n u_0,
\end{equation}
an ODE on $\XSet_n$. As discussed in \cref{sec:projection_methods},
the evolution equation on $\XSet_n$ can be expressed as system of ODEs in
$\RSet^{s(n)}$ suitable for numerical implementation, even though the ODE on
$\XSet_n$ is more convenient for the analysis. The $s(n)$ coupled ODEs must then be
solved numerically, using a timestepper, which introduces errors. 

In this section we demonstrate how this further approximation can also be handled in
operator form. We do not present a general theory, but rather show with
a simple time stepper that the cumulative error of the scheme has two contributions:
one component ascribable to the projection (to approximate
\cref{eq:cauchy_rep_2} by \cref{eq:cauchy_proj_rep_2}), and one to the specific
timestepper employed to solve \cref{eq:cauchy_proj_rep_2}. The proof of
\cref{thm:ForwEulerConvergence} gives an indication that this is
a general principle, valid for other time stepping schemes.
To fix the ideas, the problem is posed on  the time interval $J =
[t_0,t_0+T]$, which is partitioned using evenly spaced points $t_k = t_0 + k h$, and a
sequence of
approximations $\{ U_n(t_k) \}_k$ to $\{ u_n(t_k) \}_k$ is generated, starting from
$U_n(t_0) = u_n(t_0)$.
We aim to derive convergence results that relate $U_n(t_k)$ to the original solution
$u(t_k)$, and we seek for bounds of the following type
 \[
   \max_{t_k \in [t_0,t_0+T]} \| u(t_k) - U_n(t_k) \| \leq E_\text{timestep} +
   E_\text{proj.}
 \] 
As we shall see, this is achieved combining the convergence results in
\cref{sec:convergence_projection_method} for the spatial error, with standard
ODE techniques for the temporal error.

\subsection{Forward-Euler Projection methods}
We demonstrate this procedure on the simplest type of timestepper, the Forward Euler
method\footnote{This scheme is presented only for illustrative purposes, and we do not
recommend using it in numerical simulations, for the well known limitations of the
forward Euler scheme for ODEs. As we shall see below, we have used a Runge Kutta
scheme for concrete calculations.},
coupled to a generic projection scheme \cref{eq:cauchy_proj_rep_2}. We write
abstractly the scheme as follows
\begin{equation}\label{eq:Euler}
  U_n(t_{k+1}) = U_n(t_k) + h_t P_n N(t_k, U_n(t_k)), \qquad k \geq 0, \qquad U_n(t_0)
  = P_n u_{0}.
\end{equation}
In passing, we note that the operator $P_n N$ in the vectorfield of
\cref{eq:cauchy_proj_rep_2}, is on $J \times \XSet_n$ to $\XSet_n$. Standard convergence results for the Euler
scheme are available for ODEs on $\RSet^{s(n)}$, and are applicable to the
equivalent set of ODEs derivable for \cref{eq:cauchy_proj_rep_2}. We therefore
derive convergence results on the application of the Euler scheme to the abstract problem on
$\XSet_n$, and we expect that they will mirror the ones for $\RSet^{s(n)}$. 

\begin{theorem}[Convergence of the Forward-Euler Projection
  Scheme]\label{thm:ForwEulerConvergence}
  Let $\XSet \in \{C(\Omega),L^2(\Omega)\}$ and $J = [t_0,t_0+T]$. Assume
  \crefrange{hyp:domain}{hyp:externalInput}. Further, assume $u_n \in C^2(J,\XSet)$. For all
  solutions $u_n$ and $U_n$ to \cref{eq:cauchy_rep_2,eq:Euler}, respectively, it holds
  \begin{equation}\label{eq:boundEulerUn}
    \max_{t_k \in [t_0,t_0+T]} \| u(t_k) - U_n(t_k) \| \leq 
    \frac{e^{TL_n} - 1}{L_n} \tau_n(h_t)+
    e^{\beta_n} \| u - P_n u\|_{C(J,\XSet)}
  \end{equation}
  where
  \[
   \beta_n  = T \| P_n W \| \| f' \|_\infty, \qquad L_n = 1 + \beta_n/T, \qquad
  \tau_n(h) = \| u''_n\|_{C(J,\XSet)} h_t /2.
  \]
  Further, if $P_n v \to v$ for all $v \in \XSet$, then there exist positive
  constants $\kappa_x, \kappa_t$, independent of $n$, such that
  \begin{equation}\label{eq:bounEulerHom}
    \max_{t_k \in [t_0,t_0+T]} \| u(t_k) - U_n(t_k) \| \leq 
    \kappa_t h_t+
    \kappa_x \| u - P_n u\|_{C(J,\XSet)}.
  \end{equation}
\end{theorem}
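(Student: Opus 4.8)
The plan is to split the total error through the exact solution $u_n$ of the projected Cauchy problem \cref{eq:cauchy_proj_rep_2}, writing for each grid point
\[
  \| u(t_k) - U_n(t_k) \| \leq \| u(t_k) - u_n(t_k) \| + \| u_n(t_k) - U_n(t_k) \|.
\]
The first term is a purely spatial (projection) error, which I would bound uniformly in $k$ using the upper bound in \cref{eq:sandwich} of \cref{thm:convergence}, giving $\| u(t_k) - u_n(t_k) \| \leq \| u - u_n \|_{C(J,\XSet)} \leq e^{\beta_n} \| u - P_n u \|_{C(J,\XSet)}$; this is exactly the second summand in \cref{eq:boundEulerUn}. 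The second term is the classical time-discretisation error of Forward Euler for the ODE $u_n' = P_n N(t,u_n)$ posed on $\XSet_n$, so the whole argument reduces to running the standard Euler convergence proof in the Banach space $\XSet_n$.

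For the time-discretisation part I would first establish that the vector field $P_n N(t,\blank)$ is Lipschitz on $\XSet_n$ with constant exactly $L_n = 1 + \| P_n W\| \| f'\|_\infty$. Since $P_n v = v$ for $v \in \XSet_n$, for $v,w \in \XSet_n$ one has $P_n N(t,v) - P_n N(t,w) = -(v-w) + P_n W\big( F(v) - F(w)\big)$, and \cref{eq:LipF} together with the definition of $\| P_n W\|$ yields the claimed constant; note that $L_n = 1 + \beta_n/T$ with $\beta_n$ as in \cref{eq:betaDef}. Next, using the hypothesis $u_n \in C^2(J,\XSet)$ and Taylor's theorem with remainder, the local residual $R_k = u_n(t_{k+1}) - u_n(t_k) - h_t u_n'(t_k)$, where $u_n'(t_k) = P_n N(t_k,u_n(t_k))$, satisfies $\| R_k\| \leq \tfrac{h_t^2}{2} \| u_n''\|_{C(J,\XSet)} = h_t \tau_n(h_t)$.

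I would then subtract \cref{eq:Euler} from the Taylor identity to obtain the error recursion $e_{k+1} = e_k + h_t\big[ P_n N(t_k,u_n(t_k)) - P_n N(t_k,U_n(t_k))\big] + R_k$ for $e_k = u_n(t_k) - U_n(t_k)$. Since $U_n(t_0) = P_n u_0 = u_n(t_0)$ we have $e_0 = 0$, and taking norms gives $\| e_{k+1}\| \leq (1 + h_t L_n) \| e_k\| + h_t \tau_n(h_t)$. A discrete Gronwall estimate (a geometric sum) together with the elementary inequality $(1 + h_t L_n)^k \leq e^{k h_t L_n} \leq e^{T L_n}$ then produces $\max_k \| e_k\| \leq \tfrac{e^{T L_n}-1}{L_n} \tau_n(h_t)$, which is the first summand in \cref{eq:boundEulerUn}. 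Combining the two estimates yields \cref{eq:boundEulerUn}.

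For the uniform estimate \cref{eq:bounEulerHom}, I would invoke \cref{lem:boundUnPrime}: when $P_n v \to v$ for all $v \in \XSet$, the constant $\| u_n''\|_{C(J,\XSet)}$ is bounded by some $\kappa_2$ independent of $n$, while $\beta_n$, and hence $L_n$ and the prefactor $\tfrac{e^{T L_n}-1}{L_n}$, is bounded because $\| W - P_n W\| \to 0$. This makes both $e^{\beta_n}$ and $\tfrac{e^{T L_n}-1}{L_n}\,\| u_n''\|_{C(J,\XSet)} h_t^{-1}\tau_n(h_t)$ bounded above uniformly in $n$, giving the constants $\kappa_x$ and $\kappa_t$. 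The only mildly delicate point is the Banach-space Taylor expansion underpinning the truncation estimate, but since $u_n \in C^2(J,\XSet)$ is assumed this is routine; the substantive work is the \emph{uniform} control of $\| u_n''\|_{C(J,\XSet)}$ and of $\beta_n$, which is precisely what \cref{lem:boundUnPrime} supplies.
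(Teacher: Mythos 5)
Your proposal is correct and follows essentially the same route as the paper's proof: the same splitting through $u_n$ with the upper bound of \cref{eq:sandwich}, the same local-truncation-plus-Lipschitz error recursion with constant $L_n$ (the paper phrases the truncation step via an ancillary sequence $V_n$ and the Mean Value Inequality rather than a Taylor remainder, which is an equivalent computation), the same geometric-sum/discrete Gronwall step, and the same appeal to \cref{lem:boundUnPrime} and the boundedness of $\beta_n$ for the uniform constants.
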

\begin{proof}
  Let $m$ be the number of Euler steps necessary to go from $t_0$ to $t_0+T$, that is, the
  integer for which $t_m - t_0 \leq T$ and $t_{m+1} - t_0 > T$. In the proof it will
  hold $k \in \ZSet_{m}$ or $k \in \NSet_{m}$, depending on the equation.
  From \cref{thm:convergenceForward} we have
  \begin{equation}\label{eq:aux_0}
    \| u(t_k) - U_n(t_k) \| \leq e^{\beta_n} \| u - P_n u\|_{C(J,\XSet)}+ \| u_n(t_k) -
    U_n(t_k) \|.
  \end{equation}
  In order to bound $\| e_{n,k}\| = \| u_n(t_k) - U_n(t_k) \|$ we define the
  ancillary sequence 
 \[
   V_n(t_{k+1}) = u_n(t_{k}) + h_t P_n N(t_{k},u_n(t_{k})),
 \] 
 and note
 \begin{equation}\label{eq:aux_1}
   \| e_{n,k} \| \leq \| u_n(t_k) - V_n(t_k) \| + \| V_n(t_k) - U_n(t_k) \|.
 \end{equation}
 The first term is bounded as follows
 \begin{equation}\label{eq:aux_2}
 \begin{aligned}
   \| u_n(t_k) - V_n(t_k) \| 
   & = \| u_n(t_k) - u_n(t_{k-1}) - h_t P_n N(t_{k-1},u_n(t_{k-1})) \|  \\
   & = \Big\| \int_{t_{k-1}}^{t_k} P_n N(s,u_n(s)) \, ds - h_t P_n N(t_{k-1},u_n(t_{k-1})) \Big\| \\
   & = \Big\| \int_{t_{k-1}}^{t_k} \big( u'_n(s)  -  u'_n(t_{k-1}) \big) \, ds \Big\| \\
   & \leq \| u''_n (t_{k-1}) \| \int_{t_{k-1}}^{t_k} (s- t_{k-1})\, ds  \\
   & \leq \| u''_n \|_{C(J,\XSet)} \frac{h_t^2}{2} = h_t\tau_n(h_t),
 \end{aligned}
 \end{equation}
 where we used $u_n \in C^2(J,C(\Omega))$ and the Mean Value Inequality for nonlinear
 operator in Banach spaces~\cite[Proposition 5.3.11]{atkinson2005theoretical}.
 The second term in \cref{eq:aux_1} is written as 
 \[
   \begin{aligned}
     \| V_n(t_k) - U_n(t_k) \| = & \|u_n(t_{k-1}) + h_t P_n N(t_{k-1},u_n(t_{k-1})) \\
			         & - U_n(t_{k-1}) - h_t P_n N(t_{k-1},U_n(t_{k-1}))
				 \|. \\
   \end{aligned}
 \] 
 Since $U_{n}(t_0)=P_n u_0$ is in $\XSet_n$, then $P_n U_n(t_k) = U_n(t_k)$ for all $k$, hence
 bounding the terms on the right-hand side
 \begin{equation}\label{eq:aux_3}
   \| V_n(t_k) - U_n(t_k) \| \leq (1 + h_t L_n)\|u_n(t_{k-1})-U_n(t_{k-1}) \| = 
   (1 + h_t L_n)\|e_{n,k-1}\| .
 \end{equation}
 Combining \crefrange{eq:aux_1}{eq:aux_3} we obtain 
 \[
   \begin{aligned}
     \| e_{n,k}\| 
     & \leq (1 + h_t L_n)\|e_{n,k-1}\| + h_t \tau_n(h_t) \\
     & \leq (1 + h_t L_n)^k \|e_{n,0}\| + h_t \tau_n(h_t)
     \sum_{j=0}^{k-1}(1+h_tL_n)^j \\
     & = \frac{(1+h_tL_n)^k - 1}{L_n} \tau_n(h_t),
   \end{aligned}
 \]
 where we used $e_{n,0}=0$. From $(1+h_tL_n)^k \leq e^{k h_t L_n} \leq e^{TL_n}$, we obtain 
 \begin{equation}\label{eq:aux_4}
   \| e_{n,k} \| = 
   \| u_n(t_k) - U_n(t_k) \| \leq
   \frac{e^{TL_n}- 1}{L_n} \tau_n(h_t),
 \end{equation}
 whose upper bound is independent of $k$. The bound \cref{eq:boundEulerUn} is
 obtained combining \cref{eq:aux_0} with \cref{eq:aux_4}, and taking the maximum over
 $[t_0,t_0+T]$.

 Finally, the condition $P_n v \to v$ for all $v \in \XSet$ implies the boundedness
 of $\{ \beta_n \}$, hence the existence of $\kappa_x$. 
 \cref{lem:boundUnPrime} implies the existence of a
 positive constant $\kappa_2$ such that $\| u''_n\|_{C(J,\XSet)} <
 \kappa_2$, which, together with the boundedness of $\{ L_n\}$, implies the existence
 of $\kappa_t$.
\end{proof}

\cref{thm:ForwEulerConvergence} gives convergence results relatable to the ones in
\cref{thm:convergence}. The bound \cref{eq:boundEulerUn} shows that the
combined error of a Forward Euler time stepper and a projection scheme has one
component proportional to the projection error, and one component proportional to
$\tau_n(h)$, the global truncation error of the Euler scheme. In passing, we note
that the projection scheme affects, in general, also the component proportional to
$\tau_n$, through a prefactor that depends on $L_n$. 

As for
\cref{thm:ForwEulerConvergence}, there are two scenarios: if $P_n v \to v$ for all $v
\in \XSet$, then \cref{eq:bounEulerHom}, ensures that that the scheme converges to
first order in time, and at the same rate of $\| P_n u - u \|_{C(J,\XSet)}$ in space. 

If, on the other hand, $P_n v \to v$ fails for some $v$ in $\XSet$, then
convergence can still occur to certain solutions $u$; in this case, a possible
strategy is to prove convergence using \cref{eq:boundEulerUn}; one can show that $\|
W - P_n W \|
\to 0$, which implies the boundedness of $\{\beta_n\}$ and $\{L_n\}$; 
in this case, a bound on $ \| u_n''\|_{C(J,\XSet)}$ must be sought using
\cref{eq:boundUnPrime,eq:boundUnDoublePrime} in \cref{lem:boundUnPrime}.

One of the consequences of \cref{thm:ForwEulerConvergence} is that it is immediate to
assess convergence of the Forward Euler Scheme combined with any of concrete the
projection operators discussed in \cref{sec:examplesProjectionSchemes}. For instance,
we had found that the Finite-Element Collocation Scheme given by
\crefrange{eq:FECollGrid}{eq:FECollProj} converges as $O(h_x^2)$ in space. The
following result shows that combining this scheme with a Forward Euler in time we
achieve $O(h_t)$ convergence in time, and $O(h_x^2)$ in space. Results of this
type are currently presented in
literature for \textit{discrete schemes}, where quadrature rules are
prescribed~\cite{Lima2015xt,lima2019numerical,Avitabile.2020}. We show here that they
are a consequence of the theory presented in the previous chapters.

\begin{corollary}[Convergence of the Forward-Euler Finite-Element Collocation scheme]
  Let $\XSet \in \{C(\Omega),L^2(\Omega)\}$ and $J = [t_0,t_0+T]$. Assume
  \crefrange{hyp:domain}{hyp:firingRate}, and $\xi \in C^1(J,\XSet)$. Let $u$ be a
  solution to \cref{eq:Cauchy}, and $U_n$ be a solution of the Forward Euler scheme
  \cref{eq:Euler}, with $P_n$ given by \crefrange{eq:FECollGrid}{eq:FECollProj}.
  There exist positive constants $\kappa_t$, $\kappa_x$, independent of $n$, such
  that
  \[
    \max_{t_k \in [t_0,t_0+T]} \| u(t_k) - U_n(t_k) \| \leq 
     \kappa_t h_t + \kappa_x h_x^2.
  \]
  \begin{proof}
    The results follows directly from \cref{eq:bounEulerHom,cor:convFEColl}. Note
    that the constant $\kappa_x$ in the present theorem statement differs from the
    one in \cref{eq:bounEulerHom}.
  \end{proof}
  
\end{corollary}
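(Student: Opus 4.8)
The plan is to obtain this corollary as a direct composition of the Forward-Euler bound \cref{eq:bounEulerHom} with the Finite-Element Collocation projector estimate \cref{eq:FECollProjBound}; the work is essentially bookkeeping, so the proof reduces to verifying that the hypotheses of \cref{thm:ForwEulerConvergence} are satisfied and then substituting the concrete spatial rate. First I would confirm that we are in Case 1 (page~\pageref{item:scenario1}), i.e.\ that $P_n v \to v$ for all $v \in C(\Omega)$. This is precisely what \cref{cor:convFEColl} established from the first branch of \cref{eq:FECollProjBound}, where the interpolation error is dominated by the modulus of continuity $\omega(v,h_x) \to 0$. Being in Case 1 is what makes available the \emph{uniform} bound \cref{eq:bounEulerHom}, whose constants $\kappa_t, \kappa_x$ are independent of $n$.

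Next I would check the smoothness requirement of \cref{thm:ForwEulerConvergence}, namely $u_n \in C^2(J,\XSet)$. Since the corollary strengthens \cref{hyp:externalInput} to $\xi \in C^1(J,\XSet)$, \cref{lem:boundUnPrime} delivers exactly this: it guarantees $u_n \in C^2(J,\XSet)$ and, crucially, supplies the $n$-independent bound on $\| u_n'' \|_{C(J,\XSet)}$ (via \cref{eq:boundUnPrime,eq:boundUnDoublePrime} together with the boundedness of $\{\beta_n\}$ and $\{L_n\}$ in Case 1) that is absorbed into $\kappa_t$.

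With both checks in place, \cref{eq:bounEulerHom} yields
\[
  \max_{t_k \in [t_0,t_0+T]} \| u(t_k) - U_n(t_k) \| \leq
    \kappa_t h_t + \kappa_x \| u - P_n u\|_{C(J,\XSet)},
\]
so it only remains to convert the abstract projection error into the $O(h_x^2)$ rate. For this I would apply the second branch of \cref{eq:FECollProjBound} pointwise in $t$, giving
\[
  \| u - P_n u\|_{C(J,\XSet)} = \max_{t \in J}\| u(t) - I_n u(t)\|_\infty
  \leq \frac{h_x^2}{8}\, \max_{t \in J} \bigg\| \frac{\partial^2 u(\blank,t)}{\partial x^2} \bigg\|_\infty,
\]
and then relabel the product of $\kappa_x$ with $\tfrac{1}{8}\max_{t}\|\partial_x^2 u\|_\infty$ as a new $\kappa_x$, which is the stated bound.

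The one genuinely delicate point — and the place where care is needed — is the spatial regularity $u \in C(J,C^2(\Omega))$ that the final substitution silently requires: the quadratic branch of \cref{eq:FECollProjBound} is only licensed when $u(t) \in C^2(\Omega)$ for every $t \in J$. This is the same assumption invoked in the second part of \cref{cor:convFEColl}, and it is \emph{not} implied by \crefrange{hyp:domain}{hyp:firingRate} and $\xi \in C^1(J,\XSet)$ alone. It should therefore either be stated explicitly as a hypothesis or be inherited from sufficient spatial smoothness of the data $w$, $\xi$, and $u_0$; once it is granted, every remaining constant has already been certified $n$-independent by \cref{thm:ForwEulerConvergence} and \cref{lem:boundUnPrime}, and the estimate follows.
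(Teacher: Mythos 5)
Your argument is correct and takes essentially the same route as the paper, which simply composes the uniform Forward--Euler bound \cref{eq:bounEulerHom} with the $O(h_x^2)$ projector estimate established in \cref{cor:convFEColl}. Your observation that the quadratic rate tacitly requires $u \in C(J,C^2(\Omega))$ --- a hypothesis the corollary does not state explicitly --- is accurate, and the paper's own proof inherits that assumption silently from the second part of \cref{cor:convFEColl}.
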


 \section{Numerical Results}\label{sec:numerics}
\begin{figure}
  \centering
  \includegraphics[width=\textwidth]{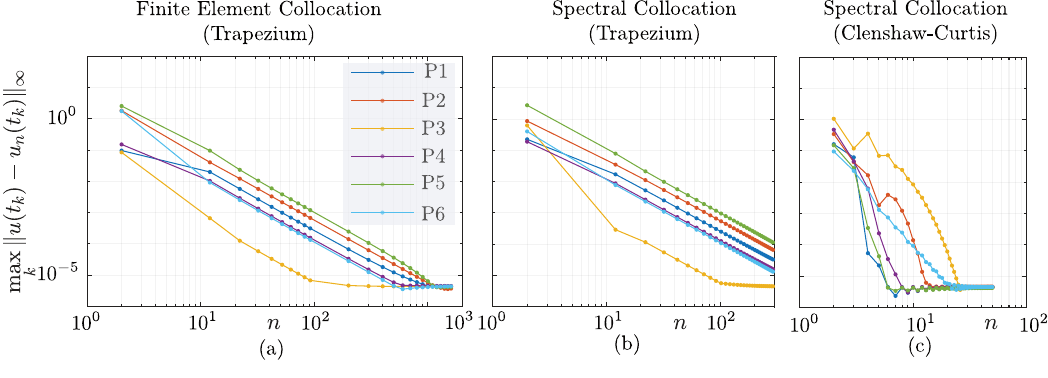}
  \caption{Convergence of (a) the Finite Element Collocation scheme of
    \cref{sec:FECollExample} with Composite Trapezium Quadrature, (b) the Spectral
    Collocation scheme of \cref{sec:SpecCollExample} with Composite Trapezium
    Quadrature, and (c) the same Spectral Collocation scheme with Clenshaw--Curtis
    quadrature, on test problems P1--P6. In (a) and (c) a quadrature scheme that matches the accuracy of the
projector has been selected, and gives convergence at the projector's rate. In (b) the
scheme displays $O(n^{-2})$ convergence, even though the projector has a
faster convergence rate, because the error is dominated by the $O(n^{-2})$ error of
the composite trapezium quadrature.}
  \label{fig:CollTests}
\end{figure}

\begin{figure}
  \centering
  \includegraphics[width=\textwidth]{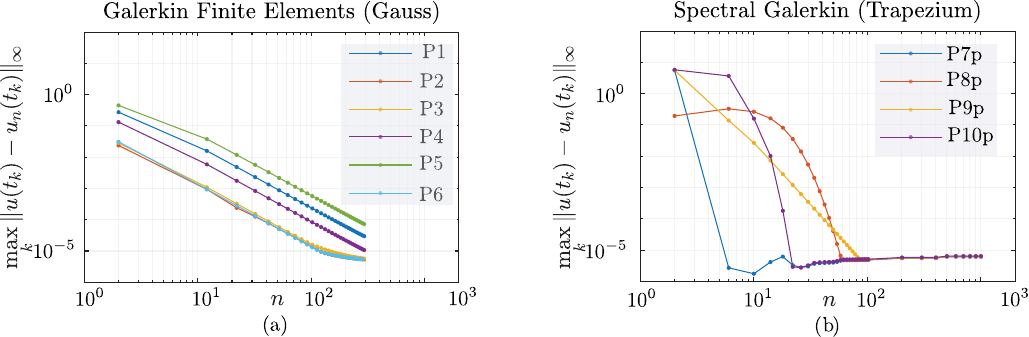}
  \caption{Convergence of (a) the Galerkin Finite Elements scheme of
    \cref{sec:FEGalerkinExample} with
    Gauss quadrature for problems P1--P6 and (b) the Spectral Galerkin scheme of
\cref{sec:SpectralGelerkinExample} on spatially-periodic problems P7p--P10p.}
  \label{fig:GalerkinTests}
\end{figure}

We tested the schemes described above using neural field equations with a solution in
closed form. We use
a common firing rate function function $f$ with explicit inverse, and a
kernel with product structure:
\[
  f(u) = \frac{1}{1+e^{-k(u-\theta)}}, \qquad
  f^{-1}(u) = \theta-\frac{1}{k}\log \frac{1-u}{u}, \qquad
  w(x,y) = \exp(-x^2+y^2) \zeta(y). 
\]
With these choices, $u_*(x,t) = f^{-1}( D \exp(-\gamma t - x^2))$, for $D \in (0,1)$,
solves the neural field problem on $\Omega = [-1,1]$ with external input given by
\[
  \xi(x,t) = \partial_t u_*(x,t) + u_*(x,t) - \zeta_0 f(u_*(x,t)),
  \qquad \zeta_0 = \int_{-1}^{1} \zeta(y) \, dy,
\]
and has therefore a closed-form expressions for suitable functions $\zeta(y)$. A
similar strategy was chosen for exact solutions to periodic problems. In particular,
we kept $f$ as before, and selected kernel and exact solutions as follows:
\[
  w_p(x,y) = \exp(-\cos^2 x+ \cos^2 y) \zeta_p(y), \qquad
  u_{*p}(x,y) = f^{-1}(D \exp(-\gamma t - \cos^2 x)),
\]
which give an exact solution to the neural field problem on $\Omega
= \RSet / 2\pi \ZSet$ for
\[
  \xi_p(x,t) = \partial_t u_{*p}(x,t) + u_{*p}(x,t) - \zeta_{0p} f(u_{*p}(x,t)),
  \qquad \zeta_{0p} = \int_{-\pi}^{\pi} \zeta_p(y) \, dy.
\]
By varying functions $\zeta$, $\zeta_p$ we obtained 10 test problems: 6 posed on
$\Omega = [-1,1]$ and labelled P1--P6, and 4 posed on
$\Omega = \RSet / 2\pi \ZSet$, labelled P7p--P10p. Parameters for the tests
are given in \cref{tab:exactProblems} in \cref{app:implementation}.
For the time discretisation we used an explicit Runge--Kutta (4,5) formula, the
Dormand--Prince pair implemented in Matlab's in-built \texttt{ode45} routine, with
default tolerance parameters. We tested several discrete schemes, by combining
Collocation or Galerkin schemes with quadrature rules. Detailed expressions for the
discrete schemes, and implementation details are given in \cref{app:implementation}.
Codes are hosted on a public repository, and all numerical
experiments in the paper can be modified and run with a single click \textit{without
  a Matlab license}, using the following coding capsule \cite{avitabile2021-codes}.

Before presenting the results, we recall that the numerical experiments
presented below use numerical schemes that slightly differ from the ones analysed in
\cref{sec:projection_methods} because: (i) a time stepper is employed for the time
discretisation and (ii) a quadrature rule is chosen to approximate integrals. The
discussions in \cref{sec:convergence_projection_method,sec:timeStepper} point to a
total error bounded by three contributions $E_\textrm{timestep}+ E_\textrm{projection} +
E_\textrm{quadrature}$. We do not yet have a convergence result for the
Runge--Kutta (4,5) pair implemented in Matlab, and for all the quadrature
rules presented below, but the convergence results in
\cref{sec:model,sec:projection_methods} predict convergence rates when the
timestepper error is negligible and the quadrature error
matches asymptotically the projection error, as we will now discuss.

\textit{Finite Element Collocation with Trapezium quadrature.}
We implemented the method described in \cref{sec:FECollExample}, with
$O(h_x^2)$ convergence, with the composite trapezium quadrature scheme, which
preserves this order of accuracy. This scheme is the most prominently used
in the mathematical neuroscience literature. \Cref{fig:CollTests}(a) shows the expected
$O(n^{-2})$ convergence of the $C(J,C^2([-1,1]))$ error for problems P1--P6. From
\cref{fig:CollTests}(a) we observe that the timestepper error dominates when the
$C(J,C^2([-1,1]))$ error is of the order of $10^{-5}$, and this will be true
henceforth for other experiments too.  \Cref{fig:CollTests}(a)
indicates that, when $E_\textrm{timestepper}$ is dominated by $E_\textrm{projection} +
E_\textrm{quadrature}$ and the latter errors are $O(n^{-2})$, then the total error is an
$O(n^{-2})$, as predicted by \cref{cor:convFEColl}. This is also confirmed by the
observation that, when the tolerance of the time stepper is tightened, the plateau in
\cref{fig:CollTests}(a) shifts from $10^{-5}$to a lower value (not shown, but verifiable via the
code capsule \cite{avitabile2021-codes}). 

\textit{Chebyshev Spectral Collocation with Trapezium and Clenshaw--Curtis quadrature.}
The scheme of \cref{sec:SpecCollExample} has been tested on problems P1-P6.
 For
these examples we expect a faster than quadratic convergence, for sufficiently smooth
kernels, provided the chosen quadrature scheme preserves this rate. In passing, we
note that the integrands $\zeta$ for P1--P6 are taken from Figure 2 in
\cite{Trefethen2008}, where the accuracy of Clenshaw--Curtis quadrature is
analysed for such functions. 
We first implemented a discrete scheme with a composite trapezium rule. From
\cref{fig:CollTests}(b) it is clear that the $O(n^{-2})$ rate of the quadrature
pollutes the overall convergence. 
We then switched to Clenshaw--Curtis quadrature, which uses Chebyshev points also as
quadrature nodes, has excellent convergence properties for this
setup~\cite{Trefethen2008}, and can be implemented with Fast Fourier Transforms
(FFTs). \cref{fig:CollTests}(c)
shows the superior convergence properties of this scheme,  as predicted by
\cref{cor:ChebCollConv}. At the time of writing we
are unaware of research papers where neural fields are simulated using the Chebyshev
Spectral Collocation with Clenshaw--Curtis quadrature, which is the most accurate and
efficient scheme between the ones presented here for non-periodic domains. 

\textit{Finite Element Galerkin with Trapezium and Gauss quadrature.} We derived from
the $O(h_x^2)$-convergent scheme of \cref{sec:FEGalerkinExample} two discrete
schemes, and tested them on P1-P6. In the first one we use composite Trapezium
quadrature to approximate the integral operator as
well as \emph{all inner products}, including the ones for the mass matrix $\langle
\ell_i,\ell_j \rangle$ which could be computed in closed form. In
\cref{app:implementation:FEGalerkin} we explain that the scheme so derived does not
require inner product evaluations, and in fact coincides with the
discrete Finite-Element Collocation scheme seen in \cref{fig:CollTests}(a).
We also derived a second discrete scheme, where the mass matrix is in closed form,
(and must be inverted at every function evaluation) and we use Gauss quadrature on reference
elements with $2$ nodes. This scheme's $O(n^{-2})$ convergence rate is seen in
\cref{fig:GalerkinTests}(a), even though it is less efficient than the one with Trapezium
quadrature which does not require inner products, as explained in
\cref{app:implementation:FEGalerkin}.  The findings in
\cref{fig:GalerkinTests}(a) are thus in line with \cref{cor:convFEGalerkin}.

\textit{Spectral Galerkin scheme with Trapezium quadrature.} 
Finally we derived a
discrete spectral Galerkin scheme from \cref{sec:SpectralGelerkinExample}, using
trapezium quadrature, which is well suited for periodic
integrands~\cite{trefethen2014exponentially}. We proceed with a pseudospectral
evaluation of the right-hand side, which can be performed with a forward and backward
FFT call. The fast convergence of the scheme is reported in
\cref{fig:GalerkinTests}(b) for periodic problems P7p--P10p (see discussion in
\cref{sec:SpectralGelerkinExample} for expected convergence rates). In passing we note that
further efficiency savings can be obtained if the neural field has a convolutional
structure, as it was shown in \cite{Rankin2013} for a collocation scheme with
pseudoscpetral evaluation of the right-hand side.

\section{Conclusions}\label{sec:conclusions} We have shown that projection methods in
use for Fredholm integral equations can be employed successfully in time-dependent
neural field equations. As in the stationary theory, convergence properties of the
projector determine the convergence rate of the scheme, and guide the choice of
quadrature rules in discrete methods. 

The theory presented here is applicable to generic domains in $\mathbb{R}^d$, and we
envisage that further extensions may lead to the adoption of projection schemes on
realistic cortices. In particular, it seems straightforward to adapt the methods
described here to the case of multiple neuronal populations. This requires the
definition of the problem on a different Banach space with respect to the ones
adopted here~\cite{Faugeras:2009gn}, and involves a bounded linear operator $L$ in place
of the operator $-\id_\XSet$ which gives the linear part of \cref{eq:NF}. The
adaptation seems to require the use of a uniformly continuous
semigroup $e^{Lt}$ in place of $e^{-t}$, used in this paper. In addition, with
suitable modifications, we envisage that projection methods can be used for neural
fields of new generation which have a different nonlocal evolution equation, but have
already been simulated with collocation or Galerkin
schemes~\cite{byrne2019next,schmidt2020bumps}.

It may also be possible to extend the projection method characterisation to neural fields with
delays for which discrete Galerkin methods exist~\cite{polner2017}.
When delays are present, the initial Cauchy problem \cref{eq:Cauchy} with $t_0=0$,
for instance, is replaced by a functional equation in
$C([-\tau,0],\XSet)$~\cite{visserSpikingNeuronsBrain2013,Gils.2013,polner2017}, with
$\tau$ being a maximal delay. A theory that blends projection methods and recent
progress on sun-star calculus
\cite{janssensClassAbstractDelay2019,janssensClassAbstractDelay2020} is unexplored,
nontrivial, and relevant for applications.

The adoption of projection methods on large scale problems continues to pose the
challenge of evaluating right-hand sides with large and dense matrices. One direction
that we are currently investigating is the adoption of multi-resolution bases
\cite{Chen:2015vv}, which lead to fast methods for stationary problems, and can
seemingly be ported to neural fields, thereby requiring only $O(n \log n)$ function
evaluations. Also, we have not investigated in this paper the stability of
timesteppers for neural fields, or a posteriori error bounds, which are important for
spatial and temporal adaptation of the schemes. We hope that this article will
stimulate the development of such techniques, and the rigorous study of numerical
approximations for spatially-extended neuroscience problems.

\section*{Acknowledgements} This article is dedicated to the memory of Prof.
Kenneth Andrew Cliffe. I am grateful to Jan Bouwe van den Berg, Lukas Bentkamp,
Stephen Coombes, Paul Houston, Gabriel Lord, Simona Perotto, and Sammy Petros, for
discussions that improved the presentation of the paper. I am particularly
grateful to an anonymous reviewer, whose comments led me to extend the results derived for the
forward problem $J=[t_0,t_0+T]$ to $J = [t_0-T,t_0+T]$, and who suggested to
restructure the paper so as to give more prominence to the abstract results.

\appendix

\section{Proof of \texorpdfstring{\cref{lem:nemytskii}}{Lemma Nemytskii operator}}\label{app:proof:lem:nemytskii}
\begin{proof}
  Let $\XSet = C(\Omega)$. If $u \in C(\Omega)$, then by \cref{hyp:firingRate} 
  $f(u) \in C(\Omega)$ and $|f(u)| \leq \|f\|_\infty$, hence $F \colon \XSet \to
  \XSet$, $F$ is bounded, and \cref{eq:boundF} holds. To prove that $F$ is Lipschitz,
  fix $x \in \Omega$; by \cref{hyp:firingRate} and the Mean Value Theorem there exists $z(x) \in
  (u(x),v(x))$ such that 
  \begin{equation}\label{eq:interm}
    | f(u(x)) - f(v(x)) | \leq |f'(z(x))| \; | u(x) - v(x) | \leq \| f' \|_\infty |
    u(x) - v(x) |,
  \end{equation}
  thus~\cref{eq:LipF} holds with $\| \blank \| = \| \blank \|_\infty$.

  Let us now turn to the case $\XSet = L^2(\Omega)$. To prove that $F$ maps
  $L^2(\Omega)$ to itself see for instance~\cite[Lemma 3.30]{Lord:2014ir}: for any $u \in
  L^2(\Omega)$, we have
  \[
    \Vert F(u) \Vert^2_{L^2(\Omega)} = \int_\Omega |f(u(x))|^2\, dx \leq |\Omega| \;
    \Vert f \Vert^2_\infty.
  \]
  thus $F \colon L^2(\Omega) \to L^2(\Omega)$, and \cref{eq:boundF} holds. 
  From \cref{eq:interm} we obtain
  \[
    \begin{aligned}
      \| F(u) - F(v) \|^2_{L^2(\Omega)} & = \int_\Omega | f(u(x)) - f(v(x)) |^2 \, dx  \\
                          & \leq \| f' \|^2_\infty \int_\Omega | u(x) - v(x) |^2 \, dx \\
			  & = \| f' \|^2_\infty \| u - v \|^2_{L^2(\Omega)}
    \end{aligned}
  \]
  which gives \cref{eq:LipF}.
\end{proof}

\section{Proof of \texorpdfstring{\cref{lem:W}}{Lemma linear integral operator}}\label{app:proof:lem:W}
\begin{proof}
  The operator $W$ is clearly linear. If $\XSet = C(\Omega)$, then \Cref{hyp:domain}
  implies compactness (hence boundedness) of $W \colon C(\Omega) \to C(\Omega)$ and
  $\| W \| = \kappa_w$ (see \cite[Section 2.8.1]{atkinson2005theoretical} and
  \cite[Section 1.2]{atkinson1997}).

  We then turn to the case $\XSet = L^2(\Omega)$. By \cref{hyp:kernel} $w \in
  L^2(\Omega \times \Omega)$, hence $W$ is a Hilbert-Schmidt operator, and this
  implies the compactness of $W$ (see, for instance \cite[Section 1.2]{atkinson1997}
  or \cite[online Chapter 8]{heil2018metrics}).
  Take $v \in L^2(\Omega)$ and set $z = Wv$. Since $w \in L^2(\Omega \times \Omega)$,
  then $w(x,\blank) \in L^2(\Omega)$ for almost all $x \in \Omega$. Thus, $w(x,\blank) v(\blank)$ is integrable
  (and $z$ well defined) for almost all $x \in \Omega$. Using
  standard definitions and the Cauchy-Schwarz inequality we have
\[
  \begin{aligned}
    \| Wu \|^2_{L^2(\Omega)} & = \int_\Omega \bigg| \int_\Omega w(x,y) u(y) \, dy \bigg|^2 \, dx  \\
	       & = \int_\Omega \langle w(x,\blank), u \rangle^2_{L^2(\Omega)} \, dx \\
	       &  \leq \int_\Omega \| w(x,\blank) \|^2_{L^2(\Omega)} \| u \|^2_{L^2(\Omega)} \, dx \\
	       & = \| u \|^2_{L^2(\Omega)} \int_\Omega \int_\Omega |w(x,y)|^2  \, dy \, dx 
	       = \| w \|^2_{L^2(\Omega \times \Omega)} \| u \|^2_{L^2(\Omega)},
  \end{aligned}
\]
which gives $\kappa_w$ when $\XSet = L^2(\Omega)$.
\end{proof}

\section{Proof of \texorpdfstring{\cref{lem:continuityN}}{Lemma Continuity of N}}\label{app:proof:lem:continuityN}
\begin{proof}
  Fix $b>0$, and consider a sequence $\{ (t_n,u_n)
  \}_{n \in \NSet} \in Q_b$ such that $(t_n,u_n) \to (t,u)$ as $n \to \infty$. We
  prove the continuity of $N$ by showing $N(t_n,u_n) \to N(t,u)$ as
  $n \to \infty$ in $\XSet$, that is, for any $\eps >0$, there exists an
  integer $m$ such that
  \[
    \| N(t_n,u_n) - N(t,u) \| \leq \eps \quad \text{for all $n \geq m$.} 
  \]
  Fix $\eps >0 $. The convergence of $(t_n,u_n)$ to $(t,u)$, the Lipschitz
  continuity (and hence continuity) of $F$ (see~\cref{lem:nemytskii}), and
  \cref{hyp:externalInput} imply the existence of
  integers $m_1$, $m_2$, $m_3$ such that
  \[
    \begin{aligned}
    & \| u_n - u \| \leq \eps / 3,
    &&\text{for all $n \geq m_1$,}\\
    & \| F(u_n) - F(u) \| \leq \eps / (3 \kappa_w), 
    &&\text{for all $n \geq m_2$,}\\
    & \| \xi(t_n) - \xi(t) \| \leq \eps / 3,
    &&\text{for all $n \geq m_3$,}
    \end{aligned}
  \]
  respectively, hence for all $n \geq m = \max(m_1,m_2,m_3)$.
   \[
       \| N(t_n,u_n) - N(t,u) \| \leq
     \| u_n - u \|+ \kappa_w \| F(u_n) - F(u) \|+
     \| \xi(t_n) - \xi(t) \| \leq \eps
   \]

  We now proceed to check the Lipschitz continuity of $N$. Using the Lipschitz
  continuity of $F$ (see \cref{lem:nemytskii}) we obtain
  for any $(t,u), (t,v) \in Q_b$ 
  \[
      \| N(t,u) - N(t,v) \| 
	\leq \| u - v \| + \| W \| \| F(u) - F(v) \| 
	\leq (1 + \kappa_w \| f' \|_\infty) \| u - v \|
  \]
  therefore $N$ is Lipschitz continuous in the second
  argument, uniformly with respect to the first, because its Lipschitz constant is
  independent of $t$.
\end{proof}

\section{Proof of \texorpdfstring{\cref{thm:existenceProjCauchy}}{Existence solution Projected Cauchy}}\label{app:proof:thm:existenceProjCauchy}
\begin{proof}
  The proof follows closely the steps in \cref{lem:continuityN,thm:existence}. Let
  \[
    Q_b = 
  \{ (t,u) \in \RSet \times \XSet_n \colon |t - t_0| \leq T, \quad \| u -
    u_0 \| \leq b \}.
  \]
  The boundedness of $P_n$ implies that $P_n N \colon Q_b \to \XSet_n$ is continuous,
  and Lipschitz continuous in its second argument, uniformly with respect to the
  first. Applying Theorem 5.2.4 in \cite{atkinson2005theoretical} we obtain the
  existence and uniqueness of a solution $u_n \in C^1(J,\XSet_n)$ to
  \cref{eq:projCauchy} with initial condition $u_n(t_0) = P_n u_0 \in \XSet_n$.
\end{proof}

\section{Proof of \texorpdfstring{\cref{lem:boundUnPrime}}{Lemma Bound on solution's
derivative}}\label{app:proof:lem:boundUnPrime}
\begin{proof}
    From the evolution equation \cref{eq:projCauchy} and \cref{lem:nemytskii} we
    obtain
    \[
      \| u'_n(t) \| \leq \| u_n(t) \| + \kappa_\Omega \| P_n W \| \| f \|_\infty +
      \| P_n \xi(t) \|,
    \]
    and \cref{eq:boundUnPrime} follows from $u_n, P_n \xi \in C(J,\XSet)$. The
    hypotheses on $W$ and $F$ guarantee that the operator 
    $K \colon \XSet \to \XSet$, $u \mapsto WF(u)$ is Fréchet differentiable with
    derivative 
    \[
      K'(u) \colon \XSet \to \XSet, \qquad v \mapsto \int_\Omega W(\blank,y) f'(u(y))
      v(y)\,dy.
    \]
    We obtain
    \[
      u''_n(t) = -u'_n(t) + P_n K'(u_n(t)) u'_n(t) + P_n \xi'(t),
    \]
    hence
    \[
      \| u''_n(t) \| \leq L_n \| u'_n (t) \| + \| P_n \xi'(t) \|
    \]
    and \cref{eq:boundUnDoublePrime} holds because $u'_n, P_n \xi' \in C(J,\XSet)$.
    The bound \cref{eq:boundUnDoublePrime} implies $u_n \in C^2(J,\XSet)$.

    The existence of $\kappa_1,\kappa_2$ in \cref{eq:boundUniformUnDiff} follows
    directly if $P_n v \to v$ for all $v \in \XSet$. Under this hypothesis: (i) by
    \cref{thm:convergence} $u_n \to u$ in ${C(J,\XSet)}$, hence the sequence with
    elements $\| u_n \|_{C(J,\XSet)}$ is bounded; (ii) the 
    same arguments used in the proof of \cref{thm:convergence}, in the discussion
    leading to \cref{eq:boundSpeed}, give $\| W - P_n W \| \to 0$, hence the
    sequences $\{\gamma_n\}$ and $\{L_n\}$ are bounded; (iii) $\xi \in C(J,\XSet)$
    implies the uniform convergence of $\{P_n \xi\}$ to $\xi$, hence the sequence
    with elements $\|P_n \xi \|_{C(J,\XSet)}$ is bounded; (iv) a similar argument on
    $\xi' \in C(J,\XSet)$ implies that the sequence with elements $\|P_n \xi
    \|_{C(J,\XSet)}$ is bounded.
\end{proof}

\section{Implementation of discrete schemes}\label{app:implementation}
\begin{table}\label{tab:exactProblems}
  \caption{Parameters for test neural field problems with exact solutions.}
  \label{tab:parameters}
  \centering
  \begin{tabular}{lccccc}
    \toprule
    Problem & $D$ & $\gamma$ & $k$ & $\theta$ & $\zeta(y)$ or $\zeta_p(y)$ \\
    \otoprule
    P1      &  0.8 &  0.5 &  5     & 0.3 & $ \exp{y} \cos y  $ \\
    P2      &  -   &  -   &  -     & -   & $ y^{20} $ \\
    P3      &  -   &  -   &  -     & -   & $ (1+16y^2)^{-1}$  \\
    P4      &  -   &  -   &  -     & -   & $ \exp (-y^2) $  \\
    P5      &  -   &  -   &  -     & -   & $ \exp (-y)$  \\
    P6      &  -   &  -   &  -     & -   & $ |y|^3 $  \\
    P7p     &  -   &  -   &  -     & -   & $ \cos^2 y $  \\
    P8p     &  -   &  -   &  -     & -   & $ (1+16 \cos^2 y)^{-1}$  \\
    P9p     &  -   &  -   &  -     & -   & $ |\cos^3 y|$  \\
    P10p    &  -   &  -   &  -     & -   & $ \cos^{20} y$ \\
    \bottomrule
  \end{tabular}
 \end{table}

\subsection{Finite Element Collocation with Trapezium quadrature} In a first
numerical test we implemented the method described in \cref{sec:FECollExample}. Since
the scheme converges as $O(h_x^2)$, we selected the composite trapezium quadrature
scheme, which preserves this order of accuracy. This leads to the set of ODEs
\begin{equation}\label{eq:discreteCollScheme}
  a'_i(t) = - a_i(t) + \sum_{j=0}^n W_{ij} f(a_j(t)) + \xi(x_i,t), \qquad a_i(0) = u_0(x_i,0),
  \qquad i \in \ZSet_{n+1}
\end{equation}
where
\[
  W_{ij} = w(x_i,x_j)\rho_j, \qquad \rho_j = 
  \begin{cases}
    h_x   & \text{for $j \in \NSet_{n-1}$,} \\
    h_x/2 & \text{for $j \in \{0,n\}$.} 
  \end{cases}
\] 

\subsection{Chebyshev Spectral Collocation with Trapezium and Clenshaw--Curtis quadrature} 
We have used Trapezium and Clenshaw--Curtis quadrature for the spectral collocation
scheme of \cref{sec:SpecCollExample}. In the former, we discretised the integrals in
\cref{eq:SpecCollCheb} using the composite trapezium rule, and arriving at a discrete
system analogous to \cref{eq:discreteCollScheme}, but where $\{x_i\}$ indicate the
$n+1$ Chebyshev nodes \cref{eq:ChebyInterpBasis}, and where a set of different $n+1$
quadrature nodes $\{z_i\}$ are taken to be evenly spaced by $h_x$, giving
\[
  W_{ij} = w(x_i,z_j)\rho_j, \qquad \rho_j = 
  \begin{cases}
    h_x   & \text{for $j \in \NSet_{n-1}$,} \\
    h_x/2 & \text{for $j \in \{0,n\}$.} 
  \end{cases}
\] 

Secondly, we implemented a scheme with Clenshaw--Curtis quadrature. In this case, we
form the matrix with elements $W(x_i,x_j) \rho_j$, where $\{ x_i \}$ are
the Chebyshev nodes and $\rho_j$ the Clenshaw--Curtis weights. The scheme is written as
\begin{equation}
  \bm{a}'(t) = - \bm{a}(t) + \bm{W} \bm{f}(\bm{a}(t)) + \bm{\xi}(t), \qquad \bm{a}(0)
  = \bm{u_0}(0).
\end{equation}
The matrix-vector product on the right-hand side, however, is evaluated calling $n+1$
times the FFT of an $(n+1)$-vector (see the accompanying
codes~\cite{avitabile2021-codes} where we have adapted for integrals the scripts in
\cite{Trefethen2008}).

\subsection{Finite Element Galerkin scheme with Gauss quadrature} 
\label{app:implementation:FEGalerkin}.
We used P1-P6 to test the Finite Element
Galerkin scheme with piecewise-linear hat functions, discussed in
\cref{sec:FEGalerkinExample}. The spatially-continuous scheme contains a sparse mass matrix
with entries $M_{ij} = \langle \ell_i,\ell_j \rangle_{L^2(-1,1)}$, computable in closed form~\cite[Section
3.3.1]{atkinson1997}. The scheme is 
\[
  \begin{aligned}
  \sum_{j=0}^n M_{ij} a'_j(t) = - \sum_{j=0}^n M_{ij} a_j(t) 
    & + \int_{-1}^1 \ell_i(x) \int_{-1}^1 w(x,y) f\Big(\sum_{k=0}^n a_k(t) \ell_k(y) \Big) \, dy \, dx\\
    & + \int_{-1}^1 \ell_i(x) \xi(x,t) \, dx \\
  \end{aligned}
\]
Since, by the projector error, this scheme converges to $O(h_x^2)$, one possibility to
obtain a matching discrete method is to use the composite Trapezium rule. In this
case, it is advantageous to pair it to a so-called \textit{mass-lumping procedure}
\cite[Section 13.3, page 595]{quarteroni2010numerical}, which uses the Trapezium rule also to evaluate the
components of $M_{ij}$. Since by the Trapezium rule $M_{ij} = \rho_i
\delta_{ij} + O(h_x^2)$, this allows us to pass from a sparse mass matrix, which must
be inverted to evaluate the right-hand side, to a new problem with an approximate,
but \textit{diagonal} mass matrix,
\[
\rho_i a'_i(t) = - \rho_i a_i(t) + \rho_i \sum_{j=0}^n W_{ij} f(a_j(t)) + \rho_i \xi(x_i,t), \qquad a_i(0) = u_0(x_i,0),
  \qquad i \in \ZSet_{n+1}
\]
where $\rho_i$ are the composite Trapezium weights. This discrete scheme uses the
fact that the inner products and integral operator on the right-hand side are
approximated at $O(h_x^2)$, that the projection scheme (before discretisation)
converges to $O(h_x^2)$, and hence one can tolerate the same error on $M_{ij}$. The main
advantage is that, once each equation is divided by the nonzero weights $\rho_i$,
this scheme is identical to the discrete Finite Element collocation
scheme~\cref{eq:discreteCollScheme}, therefore it does not require, in practice, any
inner product integration. Numerical convergence results for this scheme are
therefore given in \cref{fig:CollTests}(a).

An alternative is to proceed as in classical Finite Element methods, and pair the
hat functions $\{\ell_i\}$ with a Gaussian quadrature rule
\[
  \int_{-1}^1 \psi(x) \, dx \approx \sum_{q=1}^{n_q} \psi(z_q) \nu_q.
\]
For this scheme one introduces reference hat functions and coordinate mappings
\[
  \phi_{\pm}(z) = \frac{1\pm z}{2}, \qquad g_i: [-1,1] \to \Omega_i, \quad g_i(z) = x_i +
  \frac{(1+z)(x_{i+1}-x_i)}{2}, i \in \NSet_n
\]
and derive the following discrete scheme
\[
  \sum_{j=0}^n M_{ij} a'_j(t) = - \sum_{j=0}^n M_{ij} a_j(t) + 
  \frac{h}{2}\sum_{q=1}^{n_q} 
  \bigg[
    \phi_-(z_q) v(g_i(z_q),t) \nu_q + \phi_+(z_q) v(g_{i+1}(z_q),t) \nu_q
  \bigg]
\]
where 
\[
  v(x,t) = \xi(x,t) + \frac{h}{2} \sum_{j=1}^{n} \sum_{q=1}^{n_q} w(x,g_j(z_q)) f( a_j
  \phi^-(z_q) + a_{j+1} \phi^+(z_q)) \nu_q.
\]
This scheme has been implemented for a Gaussian quadrature scheme with $n_q=2$, and
its convergence properties are seen in \cref{fig:GalerkinTests}(a).

\subsection{Spectral Galerkin Scheme with Trapezium Quadrature} As a final test,
we implemented the spectral Galerkin scheme of \cref{sec:SpectralGelerkinExample}. We
rewrite the scheme as
\begin{equation} \label{eq:SpecGalSchemeV}
  \begin{aligned}
  a'_{i}(t) 
   = 
   &- a_{i}(t)
   + \int_{\Omega} \phi_i(x) v^*(x,t)\, dx \\
 a_i(t_0) = & \int_{\Omega} \phi_i(x) u^*_0(x) \, dx
  \end{aligned}
\end{equation}
where
\[
  v (x,t) = \xi(x,t) +
   \int_\Omega w(x,y) f\bigg( \sum_{j \in \ZSet_{n+1}} a_j(t)
 \phi_j(y)\bigg) \, dy \,dx 
\]
Since the integrand in the expression for $v$ is periodic, we selected for this
scheme a composite trapezium rule, which is well suited for periodic
integrands~\cite{trefethen2014exponentially}. In addition, the integrals in
\cref{eq:SpecGalSchemeV} cam be evaluated using the FFT. We have
implemented this scheme combining Fast Fourier Transform with a pesudospectral
evaluation of the integrands in $v$. More specifically, the scheme can be expressed
compactly in vector notation, using forward and backward Discrete Fourier Transforms
operators for vectors, as follows
\[
  \begin{aligned}
    \bm{a}'(t) & = - \bm{a}(t) + \mathcal{F}\Big[ \bm{\xi}(t) 
               + h_x \bm{W} \bm{f}\big( \mathcal{F}^{-1}[\bm{a}(t)] \big) \Big], \\
      \bm{a}(0) & = \mathcal{F}[ \bm{u}_0 ].
  \end{aligned}
\]

\bibliographystyle{siamplain}
\bibliography{references}
\end{document}